\newtheorem{thm}{Theorem}[section]
\newtheorem{lemma}[thm]{Lemma}
\newtheorem*{thm*}{Theorem}
\theoremstyle{definition}
\newtheorem{remark}[thm]{Remark}
\newcommand{\lp}{\left(}
\newcommand{\rp}{\right)}
\newcommand{\texto}[1]{\quad\mbox{#1}\quad}
\newcommand{\R}{\mathbb{R}}
\newcommand{\divergence}{\text{div}}
\newcommand{\g}{g_{eq}}
\newcommand{\trace}{\text{trace}}
\newcommand{\abs}[1]{\left\lvert #1\right\rvert}
\newcommand{\be}{\begin{equation}}
\newcommand{\ee}{\end{equation}}
\newcommand{\bee}{\begin{equation*}}
\newcommand{\eee}{\end{equation*}}
\newcommand{\bea}{\begin{eqnarray}}
\newcommand{\eea}{\end{eqnarray}}
\newcommand{\bs}{\begin{split}}
\newcommand{\es}{\end{split}}
\begin{document}

\title{\bf Fractional Laplacian in Conformal Geometry}
 \author{Chang, Sun-Yung Alice\thanks{The research of the first author is partially supported by NSF through grant
DMS-0758601; the first author also gratefully acknowledges partial support
from the Minerva Research Foundation and the Charles Simonyi Endowment fund during the academic year 08-09 while visiting Institute of Advanced Study.} \\Princeton University \and Mar\'ia del Mar Gonz\'alez \thanks{M.d.M Gonz\'alez is supported by Spain Government project MTM2008-06349-C03-01 and
GenCat 2009SGR345. Also, by NSF grant DMS-0635607, while staying at the Institute for Advanced Study in 2009.}\\Univ. Polit\`ecnica de Catalunya}
\date{}
\maketitle

\abstract{In this note, we study the connection between the fractional
Laplacian operator that appeared in the recent work of Caffarelli-Silvestre and a class of conformally covariant operators in conformal geometry.}

% -------------------------------------------------------------------------------------------------------
% -------------------------------------------------------------------------------------------------------

\section{Introduction}

\setcounter{equation}{00}

In recent years, there has been independent study of fractional order operators by two different group of mathematicians. On one hand, there are extensive works that study properties of fractional Laplacian operators as non-local operators together with applications to free-boundary value problems and non-local minimal surfaces - by Caffarelli-Silvestre \cite{Caffarelli-Silvestre}, and many others (see the related articles \cite{Caffarelli-Salsa-Silvestre:regularity-estimates}, \cite{Caffarelli-Souganidis:convergence-nonlocal}, \cite{Caffarelli-Roquejoffre-Savin:minimal-surfaces}, \cite{Caffarelli-Vasseur:drift-diffusion}); on the other hand, there is the work of Graham-Zworski \cite{Graham-Zworski:scattering-matrix}, (see also \cite{Guillarmou-Qing:spectral-characterization},  \cite{Gonzalez-Mazzeo-Sire}, \cite{Qing-Raske:positive-solutions}, for instance), that study a general class of conformally covariant operators ($P_{\gamma})$, parameterized by a real number $\gamma$ and defined on the boundary of a conformally compact Einstein manifold, and which includes the fractional Laplacian operators as a special case when the boundary is the Euclidean space setting as boundary of the hyperbolic space. In this note, we will clarify the connection between the work of these two groups.\\

This paper is organized as follows: in section 2 of the paper we will briefly describe the the work of Graham-Zworski \cite{Graham-Zworski:scattering-matrix} and the notion of the fractional Paneitz operator $P_{\gamma}$.

In section 3, we will illustrate in theorem \ref{thm-zero} that in the case when the fractional Laplacian operator ${(-\Delta)}^{\gamma}$ is defined on the Euclidean space ${\mathbb R}^n $ and $\gamma\in(0,1)$, the operator agrees with $P_{\gamma}$ on the hyperbolic space. This is done by applying the extension theorem of Caffarelli-Silvestre \cite{Caffarelli-Silvestre} on a characterization of the fractional Laplacian operator ${(-\Delta)}^{\gamma}$.
We then extend the work of Caffarelli-Silvestre \cite{Caffarelli-Silvestre} and the identification to $P_{\gamma}$ as ${(-\Delta)}^{\gamma}$ on  $\mathbb R^n$ for more general exponents $\gamma \in \lp 0, \frac n 2\rp$.
To achieve this, we first show in theorem \ref{thm-extension1} that there are two different ways to define $P_{\gamma}$ operator when $\gamma>1$, one can define it through the scattering matrix as in the original work of Graham-Zworski, or one can define it through an iterated process based on the work of Caffarelli-Silvestre; and two definitions agree (the relationship is illustrated in the claim \eqref{equation*} in the proof of theorem \ref{thm-extension1} and equations
\eqref{formula11} and \eqref{formula15}). Finally, we apply theorem
\ref{thm-extension1} to generalize the extension theorem of Caffarelli-Silvestre from $\gamma\in(0,1)$ to $\gamma\in\lp 0,\frac n 2\rp $ (this is theorem \ref{thm-extension2}).

In section 4 of the paper, we will discuss the extension theorem in the general setting of conformally compact Einstein manifolds. One point we make is that by choosing a suitable defining function (in lemma \ref{lemma-special-defining}) which is related to the eigenfunctions of Laplacian of the Einstein metric, equation (\ref{equation-extension}) of the extension theorem on a general conformally compact Einstein manifold (theorem \ref{thm-new-extension}) is the same as the extension theorem on hyperbolic space studied in section 3; it is a pure divergence equation, degenerate elliptic, with a weight in the Muckenhoupt class $A_2$.

Finally, in section 5, we will discuss the extension theorem on a general asymptotically hyperbolic manifold; where the boundary manifold may not be the totally geodesic boundary of an asymptotically hyperbolic space. In particular, it provides a natural way to define the conformal fractional Laplacian on the boundary of any compact manifold, and a related fractional order curvature $Q_\gamma$. In this case, it is interesting to see that the statement of extension theorem (theorem 5.1)  breaks down at $\gamma = \frac 1 2$ and at $\gamma > \frac 1 2$. In particular, there is an extra term in the the expression of $P_{\frac 1 2}$ which is the mean curvature of the
boundary manifold. This dichotomy has already appeared in other settings (Caffarelli-Souganidis \cite{Caffarelli-Souganidis:convergence-nonlocal}, for instance), and illustrates the fact that when $\gamma\in\lp 0,\frac 1 2\rp$, the operator presents a stronger non-local behavior than when $\gamma\in\lp\frac 1 2,1\rp$.\\

The authors wish that the results in this paper will lead to extension of the works of Caffarelli and others on fractional free boundary value problems and fractional mean curvature surfaces to general manifold settings. In particular, we pose the question of finding the relationship between the fractional order mean curvature very recently defined (see Caffarelli-Roquejoffre-Savin \cite{Caffarelli-Roquejoffre-Savin:minimal-surfaces}, Caffarelli-Valdinoci \cite{Caffarelli-Valdinoci:non-local}), and our notion of $Q_\gamma$ curvature coming from the fractional Paneitz operator. This seems to be a very interesting open question.

% -------------------------------------------------------------------------------------------------------
% -------------------------------------------------------------------------------------------------------

\section{Background}

\setcounter{equation}{00}

First  we review the connection between scattering theory on conformally compact Einstein manifolds and conformally invariant objects
on their boundaries at infinity. The main references are Graham-Zworski \cite{Graham-Zworski:scattering-matrix} and Fefferman-Graham \cite{Fefferman-Graham:largo}.

Let $M$ be a compact manifold of dimension $n$ with a metric $\hat g$. Let $\bar X^{n+1}$ be a compact manifold of dimension $n+1$ with boundary $M$, and denote by $X$ the interior of $\bar X$. A function $\rho$ is a defining function of $\partial X$ in $X$ if
$$\rho>0 \mbox{ in } X, \quad \rho=0 \mbox{ on }\partial X,\quad d\rho\neq 0 \mbox{ on } \partial X. $$
We say that $g^+$ is a conformally compact (c.c.) metric on $X$ with conformal infinity $(M,[\hat g])$ if there exists a defining function $\rho$ such that the manifold $(\bar X,\bar g)$ is compact for $\bar g=\rho^2 g^+$, and $\bar g|_M\in [\hat g]$. If, in addition $(X^{n+1}, g^+)$ is a conformally compact manifold and $Ric[g^+] = -ng^+$, then we call $(X^{n+1}, g^+)$ a conformally compact Einstein manifold.

Given a conformally compact, asymptotically hyperbolic manifold $(X^{n+1}, g^+)$ and
a representative $\hat g$ in $[\hat g]$ on the conformal infinity $M$, there is a uniquely defining
function $\rho$ such that, on $M \times (0,\delta)$ in $X$, $g^+$ has the normal form
$g^+ = \rho^{-2}(d\rho^2 + g_\rho)$ where $g_\rho$ is a one parameter family of metrics on $M$ satisfying $g_\rho|_{M}=\hat g$. Moreover, $g_\rho$ has an asymptotic expansion which contains only even powers of $\rho$, at least up to degree $n$.\\

It is well known  (c.f. Mazzeo-Melrose \cite{Mazzeo-Melrose:meromorphic-extension}, Graham-Zworski \cite{Graham-Zworski:scattering-matrix}) that, given $f\in\mathbb C^\infty(M)$ and  $s\in\mathbb C$, the eigenvalue problem
\be\label{equation-GZ}
-\Delta_{g^+} u-s(n-s)u=0,\quad\mbox{in } X
\ee
has a solution of the form
\be\label{general-solution}u = F \rho ^{n-s} + H\rho^s,\quad F,H\in\mathcal C^\infty(X),\quad F|_{\rho=0}=f,\ee
for all $s\in\mathbb C$ unless $s(n-s)$ belongs to the pure point spectrum of $-\Delta_{g^+}$.
Now, the scattering operator on $M$ is defined as $S(s)f = H|_M$, it is a meromorphic family of pseudo-differential operators in $Re(s)>n/2$. The values $s = n/2, n/2 +1 , n/2 + 2, \ldots$ are simple poles of finite rank, these are known as the trivial poles; $S(s)$ may have other poles, however, for the rest of the paper we assume that we are not in those exceptional cases.\\

We define the conformally covariant fractional powers of the Laplacian as follows: for $s=\frac{n}{2}+\gamma$, $\gamma\in \lp 0,\frac{n}{2}\rp$, $\gamma\not\in \mathbb N$, we set
\be\label{P-operator} P_\gamma[g^+,\hat g] := d_\gamma S\lp\frac{n}{2}+\gamma\rp,\quad d_\gamma=2^{2\gamma}\frac{\Gamma(\gamma)}{\Gamma(-\gamma)}.\ee
With this choice of multiplicative factor, the principal symbol of $P_\gamma$ is exactly the principal symbol of the fractional Laplacian $(-\Delta_{\hat g})^{\gamma}$, precisely, $\abs{\xi}^{2\gamma}$.
We thus have that $P_\gamma\in  (-\Delta_{\hat g})^\gamma+\Psi_{\gamma-1}$, where we denote by $\Psi_m$ to be the set of pseudo-differential operators on $M$ of order $m$.

The operators $P_\gamma[g^+,\hat g]$ satisfy an important conformal covariance property (see \cite{Graham-Zworski:scattering-matrix}). Indeed, for a conformal change of metric
\be\label{change-metric}\hat g_{v}=v^{\frac{4}{n-2\gamma}}\hat g,\quad v>0,\ee
we have that
\be\label{confomral-invariance}P_\gamma[g^+,\hat g_v]\phi = v^{-\frac{n+2\gamma}{n-2\gamma}} P_\gamma[g^+,\hat g] \lp v\phi\rp,\ee
for all smooth functions $\phi$.

We define the $Q_\gamma$ curvature of the metric associated to the functional $P_\gamma$, to be
\be\label{Q-curvature}Q_\gamma[g^+,\hat g]:= P_\gamma[g^+,\hat g](1).\ee
In particular, for a change of metric as \eqref{change-metric}, we obtain the equation for the $Q_\gamma$ curvature:
$$P_{\gamma}[g^+,\hat g](v)=v^{\frac{n+2\gamma}{n-2\gamma}} Q_\gamma[g^+,\hat g_v].$$

When $\gamma$ is an integer, say $\gamma=k$, $k\in\mathbb N$, a careful study of the poles of $S(s)$ allows to define $P_{k}$. Indeed,
$$Res_{s=n/2 +k}S(s) = c_k P_{k},\quad  c_k = (-1)^k[2^{2k}k!(k - 1)!]^{-1}.$$
These are the conformally invariant powers of the Laplacian constructed by Graham-Jenne-Mason-Sparling \cite{Graham-Jenne-Mason-Sparling}, Fefferman-Graham \cite{Fefferman-Graham:largo}, that are local operators, and satisfy
$$P_k=(-\Delta)^k + \mbox{ lower order terms}.$$
In particular, when $k=1$ we have the conformal Laplacian,
$$P_1=-\Delta +\frac{n-2}{4(n-1)} R,$$
and when $k=2$, the Paneitz operator (c.f. \cite{Paneitz:published})
$$P_2=(-\Delta)^2 +\delta\lp a_n Rg+b_n Ric\rp d+\tfrac{n-4}{2}Q_2.$$
Finally, we note that another realization of the conformal fractional powers of the Laplacian was given by Peterson in \cite{Peterson:conformally-covariant}, through an analytic continuation argument from the differential operators $P_k$.\\

% ---------------------------------------------------------------------------------------------
% -------------------------------------------------------------------------------------------------------

\section{The extension problem on hyperbolic space}\label{extension}

\setcounter{equation}{00}

Given $\gamma\in\mathbb R$, the fractional Laplacian on $\mathbb R^n$, denoted as $(-\Delta_x)^\gamma$, for a function $f:\R^n\to \R$ is defined as a pseudo-differential operator by
$$\widehat{(-\Delta_x)^\gamma} f(\xi)=\abs{\xi}^{2\gamma}\hat f(\xi),$$
i.e, its principal symbol is $\abs{\xi}^{2\gamma}$. It can also be written as the singular integral (suitably regularized)
$$(-\Delta_x)^\gamma f(x)=C_{n,\gamma}\int_{\R^n}\frac{f(x)-f(\xi)}{\abs{x-\xi}^{n+2\gamma}}\;d\xi.$$

\medskip

Caffarelli-Silvestre have developed in \cite{Caffarelli-Silvestre} an equivalent definition, in the case $\gamma\in(0,1)$, using an extension problem to the upper half-space $\R^{n+1}_+$. For a function $f:\R^n\to \R$, we construct the extension $U:\R^n\times [0,+\infty)\to \R$, $U=U(x,y)$, as the solution of the equation
\be\label{equation-CS}
\left\{\begin{split}
\Delta_x U+\frac{a}{y}\partial_y U+\partial_{yy} U=& \;0, \quad \quad x\in\R^n,\; y\in[0,+\infty),\\
U(x,0)=& f(x),\quad x\in\mathbb R^n,
\end{split}\right.
\ee
where
$$\gamma=\frac{1-a}{2}.$$
Note that equation \eqref{equation-CS} can be written as a divergence equation
$$\divergence(y^a\nabla U)=0\quad\mbox{in }\mathbb R^{n+1}_+,$$
which is degenerate elliptic.
Then the fractional Laplacian of $f$ can be computed as
\be\label{Neumann-CS}(-\Delta_x)^\gamma f=\frac{d_\gamma}{2\gamma}\lim_{y\to 0} y^a \partial_y U,\ee
where the constant $d_\gamma$ is defined in \eqref{P-operator}. We are indeed looking at a non-local Dirichlet to Neumann operator. To finish, just note that the Poisson kernel for the fractional Laplacian $(-\Delta)^\gamma$ in $\mathbb R^n$ is
$$K_\gamma(x,y)=c_{n,\gamma}\frac{y^{1-a}}{\lp\abs{x}^2+\abs{y}^2\rp^{\frac{n+1-a}{2}}},$$
and thus $U=K_\gamma *_x f$.\\

The main observation we make in this section is that, in the case $M=\R^n$ and $X=\R^{n+1}_+$ with coordinates $x\in\R^n$, $y>0$, endowed the hyperbolic metric $g_{\mathbb H}=\frac{dy^2+\abs{dx}^2}{y^2}$, the scattering operator is nothing but the Caffarelli-Silvestre extension problem for the fractional Laplacian when $\gamma\in(0,1)$. After that, we give the generalization of the result by Caffarelli-Silvestre for other exponents $\gamma\in \lp 0,\frac{n}{2}\rp\backslash\mathbb N$. For simplicity, we write $P_\gamma:=P_\gamma[g_{\mathbb H},\abs{dx}^2]$, where $\abs{dx}^2$ is the Euclidean metric on $\mathbb R^n$.

\begin{thm} \label{thm-zero}
Fix $\gamma\in(0,1)$ and $f$ a smooth function defined on $\R^n$. If $U$ is a solution of the extension problem \eqref{equation-CS}, then $u=y^{n-s}U$ is a solution of the eigenvalue problem \eqref{equation-GZ} for $s=n/2+\gamma$,  and moreover,
\be\label{first-equality}P_\gamma f=\frac{d_\gamma}{2\gamma}\,\lim_{y\to 0} \lp y^a\partial_y U\rp=(-\Delta_x)^\gamma,\ee
where $a=1-2\gamma$, $P_\gamma:=P_\gamma[g_{\mathbb H},\abs{dx}^2]$, and the constant $d_\gamma$ is defined in \eqref{P-operator}.
\end{thm}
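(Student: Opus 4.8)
The plan is to identify the Caffarelli--Silvestre extension equation \eqref{equation-CS} with the Graham--Zworski eigenvalue equation \eqref{equation-GZ} on hyperbolic space, by an explicit change of variables. First I would write out $-\Delta_{g_{\mathbb H}}$ in the coordinates $(x,y)$ for the metric $g_{\mathbb H}=y^{-2}(dy^2+\abs{dx}^2)$; a standard computation gives
\be\label{hyp-laplacian}-\Delta_{g_{\mathbb H}} u = -y^2\lp \Delta_x u + \partial_{yy}u\rp + (n-1)\,y\,\partial_y u.\ee
Then I would substitute $u = y^{n-s}U$ with $s=\tfrac n2+\gamma$, so that $n-s = \tfrac n2-\gamma$, and compute $\partial_y u$ and $\partial_{yy}u$ by the product rule. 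Plugging into \eqref{hyp-laplacian} and imposing $-\Delta_{g_{\mathbb H}}u - s(n-s)u = 0$, the zeroth-order terms in $U$ (those with no $y$-derivative of $U$) should cancel precisely because of the algebraic identity relating $s(n-s)$ to $(n-s)(n-s-1)$ and $(n-1)(n-s)$; what remains, after dividing by the common power $y^{n-s+2}$, should be exactly $\Delta_x U + \tfrac{a}{y}\partial_y U + \partial_{yy}U = 0$ with $a = 1-2\gamma = 2(n-s) - (n-1)$. This is the first assertion of the theorem, and it is mostly bookkeeping with powers of $y$.

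Next I would match the boundary asymptotics to read off the scattering operator. Since $U$ solves \eqref{equation-CS} with $U(x,0)=f$ and is smooth, near $y=0$ one has an expansion $U = f + c\, y^{1-a} + \dots = f + c\, y^{2\gamma}+\dots$; correspondingly $u = y^{n/2-\gamma}U$ behaves like $f\,\rho^{n-s} + (\text{coefficient})\,\rho^{s} + \dots$ with $\rho=y$, which is exactly the normal form \eqref{general-solution}. Hence by definition $S(s)f$ is (a constant times) the coefficient of $y^{2\gamma}$ in the expansion of $U$, i.e. $S(s)f = \tfrac{1}{2\gamma}\lim_{y\to0} y^a\partial_y U$ up to the normalization, so that $P_\gamma f = d_\gamma S(\tfrac n2+\gamma)f = \tfrac{d_\gamma}{2\gamma}\lim_{y\to 0}y^a\partial_y U$. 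The constant $\tfrac{1}{2\gamma}$ comes from $\partial_y(y^{2\gamma}) = 2\gamma y^{2\gamma-1}$ and $y^a\cdot y^{2\gamma-1}=1$. The remaining equality with $(-\Delta_x)^\gamma f$ is then immediate from \eqref{Neumann-CS}, which is precisely the Caffarelli--Silvestre characterization. One should double-check that the normalization $d_\gamma$ in \eqref{P-operator} is the same constant that makes the symbols agree, which is consistent with the remark after \eqref{P-operator} that the principal symbol of $P_\gamma$ is $\abs{\xi}^{2\gamma}$.

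The main obstacle I anticipate is not the formal computation but the justification of the boundary expansion: one must know that the Caffarelli--Silvestre solution $U$ has the regularity needed so that $y^a\partial_y U$ has a limit and that this limit matches the $\rho^s$-coefficient in the Graham--Zworski parametrix \eqref{general-solution}, rather than merely a formal power series. This requires invoking that for $f\in\mathcal C^\infty(\R^n)$ the Poisson extension $U=K_\gamma *_x f$ is smooth up to an expansion in powers of $y$ and $y^{2\gamma}$, and that the solution of \eqref{equation-GZ} of the form \eqref{general-solution} is unique for $s=\tfrac n2+\gamma$ with $\gamma\in(0,1)$ (away from the point spectrum, which is empty for $\mathbb H^{n+1}$ in this range). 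Granting uniqueness, the explicitly constructed $u=y^{n-s}U$ must be \emph{the} solution, and the identification of the scattering operator follows. A minor point to handle carefully is the sign and constant in $a = 1-2\gamma$ versus the hyperbolic weight $y^{2(n-s)-(n-1)}$; one verifies $2(n-s)-(n-1) = 2(\tfrac n2-\gamma)-(n-1) = 1-2\gamma = a$, so the two degenerate-elliptic equations literally coincide.
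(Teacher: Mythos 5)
Your proposal is correct and reaches the same conclusion as the paper, but the route for the first part is genuinely different in flavor. You compute the hyperbolic Laplace--Beltrami operator directly in the half-space coordinates,
\[
-\Delta_{g_{\mathbb H}} u = -y^{2}\bigl(\Delta_x u + \partial_{yy}u\bigr) + (n-1)\,y\,\partial_y u,
\]
substitute $u=y^{n-s}U$, and verify by the product rule that the zeroth-order terms in $U$ cancel exactly because $-(n-s)(n-s-1)+(n-1)(n-s)=s(n-s)$, leaving $\Delta_x U + \partial_{yy}U + \tfrac{a}{y}\partial_y U = 0$ with $a=2(n-s)-(n-1)=1-2\gamma$. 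The paper instead passes through the conformal Laplacian: it rewrites the scattering equation as $L_{g_{\mathbb H}}u + (\gamma^2-\tfrac14)u = 0$, invokes the conformal covariance $L_{g_{\mathbb H}}(\psi)=y^{\frac{n+3}{2}}L_{g_{eq}}\bigl(y^{-\frac{n-1}{2}}\psi\bigr)$ for the flat metric $g_{eq}=y^{2}g_{\mathbb H}$, and then reduces to the same equation. Your computation is more elementary and self-contained; the paper's is slicker and, importantly, is exactly the template that generalizes to arbitrary conformally compact Einstein manifolds in Lemma \ref{lemma-any-extension}, where no explicit coordinate formula for $\Delta_{g^+}$ is available. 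So the paper's choice is not accidental --- it is designed to carry over to Section 4. For the second part your asymptotic matching is identical to the paper's (read off the coefficient of $y^{2\gamma}=y^{2s-n}$ from $U=F+y^{2s-n}H$, with $\partial_y(y^{2\gamma})=2\gamma y^{2\gamma-1}$ producing the $2\gamma$), and you are right to flag the uniqueness/regularity point: one needs that the solution of \eqref{equation-GZ} with expansion \eqref{general-solution} is unique for $s=\tfrac n2+\gamma$, $\gamma\in(0,1)$, so that the $u$ built from the Caffarelli--Silvestre extension is \emph{the} scattering solution. The paper takes this for granted (it simply starts from the Graham--Zworski solution), so your version is in fact slightly more careful on this point.
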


\begin{proof}
Fix $f$ on $\mathbb R^n$ and let $u$ be a solution of the scattering problem
\be\label{formula1}-\Delta_{\mathbb H}u-s(n-s)u=0\mbox{ in }X.\ee
We know that $u$ can be written as
\be\label{formula4}u=y^{n-s}F+y^s H,\ee
where $F|_{y=0}=f$ and $S(s)f
=h$ for $h=H|_{y=0}$.
Moreover,
\be\label{formula5}
F(x,y)=f(x)+f_2(x)y^2+o(y^2) \texto{and} H(x,y)=h(x)+h_2(x)y^2+o(y^2).
\ee

On the other hand, the conformal Laplacian operator for a Riemannian metric $g$ in a manifold $X$ of dimension $N=n+1$ is defined as
$$L_g=-\Delta_g+\frac{N-2}{4(N-1)}R_g.$$
For the hyperbolic metric, $R_{g_{\mathbb H}}=-n(n+1)$, so that
\be\label{conformal-laplacian2}
L_{g_{\mathbb H}}=-\Delta_{g_{\mathbb H}}-\tfrac{n^2-1}{4}\ee
Then, from \eqref{formula1} we can compute
\be\label{formula40}
0=-\Delta_{g_{\mathbb H}} u-s(n-s)u=L_{g_{\mathbb H}} u+\lp\gamma^2-\tfrac{1}{4}\rp u=y^{\frac{n+3}{2}} L_{g_{eq}} \lp y^{-\frac{n+1}{2}}u\rp+\lp\gamma^2-\tfrac{1}{4}\rp u.\ee
where in the last equality we have used the conformal covariant property of the conformal Laplacian for the change of metric $g_{eq}=y^2 g_{\mathbb H}$:
\be\label{conformal-laplacian1}
L_{g_{\mathbb H}}(\psi)=y^{\frac{n+3}{2}}L_{\g}\lp y^{-\frac{n-1}{2}}\psi\rp.\ee
Next, we change $u=y^{n-s}U$, and note that
\be\label{conformal-laplacian3}L_{g_{eq}}=-\Delta=-\Delta_{x}-\partial_{yy},\ee
so it follows that
\be\label{formula3}\begin{split}L_{g_{eq}}&\lp y^{\frac{n+1}{2}-s}U\rp\\
&=-y^{\frac{n+1}{2}-s}\left[\Delta_x U +\partial_{yy}U+\frac{a}{y}\partial_y U+\lp\tfrac{n+1}{2}-s\rp\lp\tfrac{n+1}{2}-s-1\rp\frac{U}{y^2} \right].\end{split}\ee
Substituting \eqref{formula3} into \eqref{formula40}, we observe that with the choice of $s=\frac{n}{2}+\gamma$ and $a=1-2\gamma$ we arrive at
$$\Delta_x U +\partial_{yy}U+\frac{a}{y}\partial_y U=0,$$
as we wished.

For the second part of the lemma, note that
\be\label{eq4}P_\gamma f=d_\gamma S\lp\tfrac{n}{2}+\gamma\rp=d_\gamma\;h,\ee
where $h$ is given in \eqref{formula5}. On the other hand, we also have that
$$U=y^{s-n}u=F+y^{2s-n}H,$$
and thus, looking at the orders of $y$ in \eqref{formula5}, we can conclude that the limit
\be\label{eq3}\lim_{y\to 0} y^{a}\partial_y U\ee
exists and equals $h$ times the constant $2\gamma$.
The lemma is proven by comparing \eqref{eq3} together with \eqref{eq4}, with the Caffarelli-Silvestre construction for the fractional Laplacian as given in \eqref{Neumann-CS}.
\end{proof}

\bigskip

The next step is to generalize theorem \ref{thm-zero} to other non-integer exponents $\gamma$. To do this, we will first establish in theorem 3.2 below that there are two ways to define the operator $P_{\gamma}$ when $\gamma >1$, and the two definitions agree. We will then apply theorem 3.1 generalize  theorem \ref{thm-zero} to $\gamma >1$.

The first way to define $P_{\gamma}$ is the original definition of  Graham Zworski \cite{Graham-Zworski:scattering-matrix}, that is to define it using scattering matrix; thus when $\gamma >1$, $P_{\gamma} f$ agrees with a higher order term in the power series expansion of the solution of a second order equation (e.g. equation \eqref{formula15});
 the other is to define it by iterating the work of Caffarelli-Silvestre resulting in a PDE of order higher than
2, one can represent $P_{\gamma} f_0 $ as a lower order term in the power series expansion of the solution of this PDE (e.g. equation \eqref{formula11}); we will show that the two definitions agree (see  claim \eqref{equation*}).\\

First, fixed $\gamma\in\lp 0,\frac{n}{2}\rp\backslash \mathbb N$, if $\gamma=m+\gamma_0$, for $m=[\gamma]\in\mathbb N$, $\gamma_0\in(0,1)$, we can define the fractional Laplacian on $\mathbb R^n$ inductively as
\be\label{definition-Laplacian}(-\Delta_x)^\gamma= (-\Delta_x)^{\gamma_0}\circ(-\Delta_x)^m.\ee
We have:

\begin{thm} \label{thm-extension1}
For any $\gamma\in\lp 0,\frac{n}{2}\rp\backslash\mathbb N$, we have that
$$P_\gamma[g_{\mathbb H},\abs{dx}^2]=(-\Delta_x)^\gamma,$$
where the fractional conformal Laplacian $P_\gamma$ on $\mathbb R^n$ is defined as in \eqref{P-operator}.
\end{thm}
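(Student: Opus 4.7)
The plan is to prove Theorem \ref{thm-extension1} by reducing the case $\gamma = m + \gamma_0$ with $m = [\gamma] \geq 1$ and $\gamma_0 \in (0,1)$ to the base case $m = 0$ established in Theorem \ref{thm-zero}. I would exhibit two characterizations of $P_\gamma f$ and verify that they coincide: on one hand, the scattering representation $P_\gamma f = d_\gamma H|_{y=0}$, where $u = y^{n-s} F + y^s H$ solves \eqref{formula1} at $s = n/2 + \gamma$, so that $P_\gamma f$ appears as the singular-order coefficient of a second-order problem; on the other hand, an iterated Caffarelli-Silvestre representation, in which $P_\gamma f$ is extracted as a lower-order Taylor coefficient of the solution of a higher-order degenerate elliptic PDE obtained by iterating \eqref{equation-CS}. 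These are, respectively, the representations \eqref{formula15} and \eqref{formula11} referenced in the outline, and the equality of the two is the claim \eqref{equation*}.

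The first step is to expand the regular factor $F(x,y) = \sum_{k \geq 0} F_{2k}(x) y^{2k}$ of the scattering solution: inserting this ansatz into \eqref{formula1} and matching powers of $y$ produces a recursion
$$F_{2k} = c_k(\gamma,n)\, \Delta_x^k f, \qquad 2k < 2\gamma,$$
with rational constants $c_k$ controlled by $s(n-s)$ and the indicial exponent $n-s$. Since $2\gamma > 2m$, the singular contribution $y^{2\gamma}(h + O(y^2))$ does not affect the first $m+1$ Taylor coefficients of $U := y^{s-n} u$. This Taylor data matches that produced by iterating \eqref{equation-CS}, and after subtracting the regular Taylor polynomial of order $2m$ the remainder satisfies a Caffarelli-Silvestre type extension problem with new exponent $\gamma_0$ and boundary value a fixed multiple of $(-\Delta_x)^m f$. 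Because $\Delta_x$ commutes with $\partial_y$ and with multiplication by functions of $y$ alone, this reduction is compatible with the conformal change of dependent variable already used in the proof of Theorem \ref{thm-zero}. Applying Theorem \ref{thm-zero} to the reduced problem then yields $(-\Delta_x)^{\gamma_0}[(-\Delta_x)^m f]$, which by \eqref{definition-Laplacian} equals $(-\Delta_x)^\gamma f$.

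The principal obstacle I anticipate is the bookkeeping of multiplicative constants. The factor $d_\gamma = 2^{2\gamma}\Gamma(\gamma)/\Gamma(-\gamma)$ from \eqref{P-operator} must match, after iteration, the product of $d_{\gamma_0}$ with the indicial denominators $(n/2 + \gamma - j)(n/2 - \gamma + j)$ for $j = 1, \ldots, m$ entering the recursion for the $F_{2k}$. This reduces to a $\Gamma$-function identity; verifying it, together with ensuring that no intermediate singular term of order $y^{2\gamma - 2k}$ with $0 < k < m$ contaminates the reduced Caffarelli-Silvestre problem, is where the technical weight of the argument concentrates and is what prevents Theorem \ref{thm-zero} from extending to $\gamma > 1$ by a purely formal substitution.
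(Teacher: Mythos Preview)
Your overall strategy---reduce from $\gamma=m+\gamma_0$ to the base case $\gamma_0\in(0,1)$ with new boundary datum $(-\Delta_x)^m f$, then invoke Theorem~\ref{thm-zero}---is exactly the paper's, and your identification of the target claim $P_{\gamma_0}\big[(-\Delta_x)^m f\big]=P_\gamma f$ (the claim~\eqref{equation*}) is correct. The gap is in the mechanism you propose to descend from $\gamma$ to $\gamma_0$.

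Subtracting the regular Taylor polynomial of $U=y^{s-n}u$ does \emph{not} produce a function satisfying the Caffarelli--Silvestre equation with exponent $\gamma_0$. Concretely, if $U$ solves $\Delta U+\tfrac{a}{y}\partial_y U=0$ with $a=1-2\gamma$ and you set $V:=y^{-2}(U-f)$ (the case $m=1$), a direct computation gives
\[
\Delta V+\tfrac{a+2}{y}\partial_y V \;=\; -2y^{-3}\partial_y U+4\gamma\,y^{-4}(U-f)-y^{-2}\Delta_x f,
\]
and although the $y^{-2}$ singularity cancels by the recursion for $F_2$, the right-hand side still has a nonzero $O(1)$ contribution (coming from $F_4$, and likewise from $h_2$, etc.). So $V$ is not a solution of the extension problem at parameter $\gamma-1$, and the inductive step fails. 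The same obstruction persists for higher $m$: subtraction followed by rescaling in $y$ does not intertwine the two equations.

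What does intertwine them is the operator $B:=y^{-1}\partial_y$. The paper's key observation is that if $U_m$ solves the extension problem at parameter $\gamma_m$, then $\Delta U_m$ (equivalently $-\tfrac{a_m}{y}\partial_y U_m$) solves the extension problem at parameter $\gamma_{m-1}=\gamma_m-1$; this is verified by differentiating the equation once in $y$ and once applying $\Delta$, giving \eqref{formula11}. Matching boundary values then forces $U_{m-1}=\tfrac{1+a_m}{y}\partial_y U_m$, and comparing the coefficient of $y^{2s_{m-1}-n}$ on both sides yields the one-step relation $2\gamma_m(1+a_m)h_m=h_{m-1}$. Iterating this gives \eqref{equation*}, with the constant bookkeeping reducing to the elementary identity $\prod_{j=1}^m 2\gamma_j(1+a_j)=d_\gamma/d_{\gamma_0}$ rather than a $\Gamma$-function computation. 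Your proposal would go through if you replaced ``subtract the Taylor polynomial'' by ``apply $y^{-1}\partial_y$,'' which is the missing idea.
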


\begin{proof} The Proof is by induction on $m$.
We set up the following notation:
\bee\begin{split}
& \gamma_j=\gamma-(m-j) \texto{for} j=0,\ldots,m. \quad \mbox{Note that } j<\gamma_j<j+1, \quad \gamma_m=\gamma,\\
& s_j=\frac{n}{2}+\gamma_j,\\
& a_j=1-2\gamma_j.
\end{split}\eee
For each $j=0,\ldots,m$, we set $f_j:=(-\Delta_{x})^{m-j} f$.
Then the  eigenvalue problem
$$-\Delta_{g_\mathbb H} u_{s_j}-{s_j}(n-s_j)u_j=0,\quad \mbox{in } \mathbb H^{n+1},$$
has  a unique solution $u_j=F_j  y^{n-s_j}+H_j y^{s_j}$ satisfying $F_j|_{y=0}=f_j$.
Set $h_j=H_j|_{y=0}$. Then the scattering operator is simply
$$P_{\gamma_j} f_j=d_{\gamma_j}S(s_j)f_j=h_j.$$
We set $U_j:=y^{n-s_j} u_j$. The same computation as in the proof of theorem \ref{thm-zero} gives that $U_j$ is a solution of
\be\label{eq200}\Delta U_j+\frac{a_j}{y}\partial_y U_j=0.\ee

On the other hand, since $\gamma_0\in(0,1)$, theorem \ref{thm-zero} applied to $U_0$ implies that
$$h_0=\frac{1}{2\gamma_0}\lim_{y\to 0} \lp y^{a_0} \partial_y U_0\rp,$$
and this limit is well defined.

We already know from theorem \ref{thm-zero} that $P_{\gamma_0}=(-\Delta_x)^{\gamma_0}$. On the other hand, by construction $f_0=(-\Delta_x)^m f$. Now we claim that
\be\label{equation*}P_{\gamma_0}f_0=P_\gamma f,\ee
or equivalently, because of the definition of $P_\gamma$ as in \eqref{P-operator},
\be\label{conclusion-1}d_{\gamma_0} h_0=d_\gamma h_m.\ee
The proof of the theorem will be completed if we show claim \eqref{equation*}.\\

We claim first that
\be\label{formula20}\Delta U_m=-\frac{a_m}{1+a_{m}}U_{m-1}\quad \mbox{in }\mathbb R^{n+1}_+.\ee
Indeed, as mentioned above in \eqref{eq200}, $U_m$ is a solution of
\be\label{formula12}\Delta U_m +\frac{a_m}{y} \partial_y U_m=0.\ee
Differentiating above expression with respect to $y$, we obtain
\be\label{formula13}\Delta \lp \partial_y U_m \rp -\frac{a_m}{y^2}\partial_y U_m+\frac{a_m}{y} \partial_{yy} U_m=0.\ee
And taking the Laplacian of \eqref{formula12},
\be\label{formula14}\Delta^2 U_m +a_m\lp\frac{1}{y}\Delta_x(\partial_y U_m)+\frac{1}{y^3}\partial_y U_m-\frac{2}{y^2}\partial_{yy}U_m+\frac{1}{y}\partial_{yyy}U_m\rp=0. \ee
Substitute \eqref{formula13} into \eqref{formula14} and take into account that $a_m=a_{m-1}-2$. This proves that
\be\label{formula11}\Delta^2 U_m+\frac{a_{m-1}}{y}\partial_y(\Delta U_m)=0.\ee
Next, note from \eqref{eq200} that the function $U_{m-1}$ solves the equation
\be\label{formula15}\Delta U_{m-1}+\frac{a_{m-1}}{y}\partial_y U_{m-1}=0.\ee
Thus we see from \eqref{formula11} and \eqref{formula15} that $\Delta U_m$ and $U_{m-1}$ satisfy the same equation in $\mathbb R^{n+1}_+$. Let us compare now the boundary values. First of all, by hypothesis,
\be\label{boundary1}U_{m-1}|_{y=0}=-\Delta_x f.\ee
On the other hand, we claim that
\be\label{boundary2}\Delta U_{m}|_{y=0}=\frac{a_m}{1+a_m}\Delta_x f.\ee
The proof of this fact is a simple computation: we know that $U_{m}=F_m+y^{2s_m-n}H_{m}$ where $F_m=f+b_m y^2+O(y^3)$ and $H_m=h_m+O(y^2)$. Then
we can compute
\be\label{formula18}\Delta U_m|_{y=0}=\Delta_x f+2b_m\ee
 and $\frac{1}{y}\partial_y U_m|_{y=0}=2b_m$. From \eqref{formula12} we must have
$$b_m=\frac{\Delta_x f}{2(-1-a_m)}.$$
Equation \eqref{formula18} implies then
$$\Delta U_m|_{y=0}=\frac{a_m}{1+a_m}\Delta_x f.$$
We have shown from \eqref{boundary1} and \eqref{boundary2} that $\Delta U_m$ and $U_{m-1}$ have the same boundary values (modulo a multiplicative constant). Since, as we have mentioned, they satisfy the same second order elliptic equation, they must coincide in the whole $\mathbb R^{n+1}_+$. This is,
\be\label{formula20}\Delta U_m=-\frac{a_m}{1+a_{m}}U_{m-1},\ee
as claimed in \eqref{formula20}.\\

Now we are ready to complete the proof of the theorem. From \eqref{formula20} and \eqref{formula12} we conclude that
\be\label{formula21}U_{m-1}=\frac{1+a_m}{y}\lp\partial_y U_m\rp.\ee
We look at the asymptotic expansions for $U_m$ and $U_{m-1}$:
$$U_m=F_m+y^{2s_m-n}H_m, \quad F_m=f_m +O(y^2),\quad H_m=h_m+O(y^2),$$
so that
$$\frac{1}{y}\partial_y U_{m}=\frac{1}{y}\partial_y F_{m} +\lp 2s_m-n\rp y^{2s_{m}-n-2} H_{m}+y^{2s_m-n}\left[\frac{1}{y}\partial_y H_m\right],$$
while
$$U_{m-1}=F_{m-1}+y^{2s_{m-1}-n} H_{m-1},\quad F_{m-1}=f_{m-1}+O(y^2),\quad H_{m-1}=h_{m-1} +O(y^2).$$
Since we have the relation \eqref{formula21}, comparing the coefficients of the term $y^{2s_m-n-2}=y^{2s_{m-1}-n}$ we obtain that
$$(2s_m-n)(1+a_m)h_m=h_{m-1},$$
which is
\be\label{inductive-step}(2\gamma_m) h_m=h_{m-1}.\ee
We set
$$A_m=2^{m}\gamma_m\ldots\gamma_1,\,\mbox{ if }\, m>0,\quad A_0=1,$$
and
\be\label{constant-c}c_m=\prod_{j=1}^m (a_j+1) \,\mbox{ if }\, m>0,\quad c_0=1,\ee
which will be necessary later.
Applying \eqref{inductive-step} inductively we arrive at
\be\label{relation-1}A_m c_m h_m=h_0.\ee
But because
$$c_m A_m=\frac{d_{\gamma}}{d_{\gamma_0}},$$
then we have actually shown \eqref{conclusion-1} and the theorem is proved.
\end{proof}

Moreover, we have the following characterization for $P_\gamma:=P_\gamma[g_{\mathbb H},\abs{dx}^2]$ as a Dirichlet-to-Neumann operator, thus generalizing the result of Caffarelli-Silvestre for exponents $\gamma\in\lp 0,\frac{n}{2}\rp\backslash \mathbb N$ not necessarily less than one:

\begin{thm} \label{thm-extension2}
Fix $\gamma\in\lp 0,\frac{n}{2}\rp\backslash\mathbb N$, $a=1-2\gamma$.
Let $f$ a smooth function defined on $\R^n$, and let $U=U(x,y)$ be the solution of the boundary value problem
\bee
\left\{
\begin{split} \Delta_x U +\frac{a}{y}\partial_y U+\partial_{yy} U&=0 &\texto{in}\R^{n+1}_+,\\
U(x,0)&=f(x) &\texto{for all}x\in\R^n.
\end{split}
\right.
\eee
Then function $u:=y^{n-s}U$ is the solution of the Poisson equation on the hyperbolic space $\mathbb H^{n+1}$
\bee\left\{    \begin{split}
-\Delta_{g_\mathbb H} u-s(n-s)u&=0 \quad\mbox{in}\quad \mathbb H^{n+1},\\
u&=F y^{n-s}+Hy^s,\\
F(x,0)&=f(x).\end{split}\right.\eee
And, the following limit exists and we have the equality
\be \label{Neumann}P_\gamma f=\frac{d_\gamma}{2\gamma_0} A_m^{-1} \lim_{y\to 0}y^{a_0}\partial_y\left[y^{-1}\partial_y \lp y^{-1}\partial_y\lp\ldots y^{-1}\partial_y U\rp\rp \right],\ee
where we are taking $m+1$ derivatives in the above expression, and the constant is given by
\be\label{A_m}A_m=2^{m}(\gamma-1)\ldots(\gamma-m+1).\ee
We are using the notation $m=[\gamma]\in\mathbb N$, $\gamma_0=\gamma-m$, $a_0=1-2\gamma_0$, and the constant $d_\gamma$ as defined in \eqref{P-operator}.
\end{thm}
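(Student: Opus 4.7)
The first assertion, that $u = y^{n-s}U$ solves $-\Delta_{g_{\mathbb H}}u - s(n-s)u = 0$ with the prescribed $F y^{n-s}+H y^s$ asymptotics and $F|_{y=0}=f$, is the very same conformal-change-of-metric calculation already carried out in the proof of Theorem \ref{thm-zero}. Indeed, writing $u = y^{n-s}U$, applying the covariance identity \eqref{conformal-laplacian1} for $g_{eq} = y^2 g_{\mathbb H}$ and expanding \eqref{conformal-laplacian3}, the scattering equation collapses to exactly the weighted extension PDE with weight $a = 1-2\gamma$. So the real content of Theorem \ref{thm-extension2} is the Dirichlet-to-Neumann formula \eqref{Neumann}.

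The plan for \eqref{Neumann} is to reduce the general exponent $\gamma = m + \gamma_0$ to the sub-unit case $\gamma_0 \in (0,1)$ already handled by Theorem \ref{thm-zero}, using the hierarchy of extensions introduced in the proof of Theorem \ref{thm-extension1}. Let $U_m := U$ and, for $j = 0,\ldots,m$, let $U_j$ be the extension (of weight $a_j = 1 - 2\gamma_j$) associated with $f_j = (-\Delta_x)^{m-j} f$. The computation yielding \eqref{formula20} shows that $\Delta U_m$ and $U_{m-1}$ satisfy the same second-order degenerate-elliptic equation in $\mathbb R^{n+1}_+$ with matching boundary data, so that $U_{m-1} = (1+a_m)\,y^{-1}\partial_y U_m$. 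Repeating \eqref{formula11}--\eqref{boundary2} verbatim at every level $j$, with indices shifted, gives
$$U_{j-1} \;=\; (1+a_j)\, y^{-1}\partial_y U_j, \qquad j = 1,\ldots,m,$$
and iterating $m$ times produces the key identity
$$U_0 \;=\; c_m \,\bigl(y^{-1}\partial_y\bigr)^m U, \qquad c_m = \prod_{j=1}^m (1+a_j),$$
with $c_m$ the constant from \eqref{constant-c}.

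To conclude, observe that $U_0$ solves the extension problem with weight $a_0 = 1-2\gamma_0$ and $\gamma_0\in(0,1)$, so Theorem \ref{thm-zero} applies to $U_0$ and guarantees that the limit $\lim_{y\to 0} y^{a_0}\partial_y U_0 = 2\gamma_0\,h_0$ exists, where $h_0 = S(n/2+\gamma_0) f_0$. Combining this with the relation $c_m A_m h_m = h_0$ from \eqref{relation-1} and with $P_\gamma f = d_\gamma h_m$, we compute
$$P_\gamma f \;=\; \frac{d_\gamma}{c_m A_m}\,h_0 \;=\; \frac{d_\gamma}{2\gamma_0\,c_m A_m}\,\lim_{y\to 0} y^{a_0}\partial_y U_0 \;=\; \frac{d_\gamma}{2\gamma_0\,A_m}\,\lim_{y\to 0} y^{a_0}\partial_y\bigl[(y^{-1}\partial_y)^m U\bigr],$$
the factor $c_m$ cancelling against the one in $U_0 = c_m(y^{-1}\partial_y)^m U$. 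This is exactly \eqref{Neumann}, with $A_m = 2^m\gamma_m\cdots\gamma_1$ as in Theorem \ref{thm-extension1}.

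The step I expect to require the most care is the propagation of the identity $U_{j-1} = (1+a_j)\,y^{-1}\partial_y U_j$ down the full tower: at each level one must re-verify that both sides solve the same weighted PDE (by differentiating the equation for $U_j$ and matching it against the equation for $U_{j-1}$) and that their boundary values on $\{y=0\}$ agree up to the right multiplicative constant, which is a bookkeeping exercise of the type carried out in \eqref{formula12}--\eqref{boundary2}. Once the tower is in place, the existence of the limit in \eqref{Neumann} and the ensuing asymptotic behaviour of $U$ near $y=0$ follow automatically from the $\gamma_0\in(0,1)$ case of Theorem \ref{thm-zero}.
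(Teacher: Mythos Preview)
Your proposal is correct and follows essentially the same approach as the paper. The paper proves \eqref{Neumann} by induction on $m$, using the single-step relation \eqref{formula21} (i.e., $U_{m-1}=(1+a_m)y^{-1}\partial_y U_m$) to pass from the inductive hypothesis for $U_{m-1}$ to the statement for $U_m$; you instead iterate that relation down the whole tower to obtain $U_0=c_m(y^{-1}\partial_y)^m U$ first and then apply Theorem~\ref{thm-zero} once, which is the same induction unfolded.
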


\begin{proof}
We keep the same notation as in the previous theorem. In this construction, $U$ is precisely $U_m$.

We would like to show first first that
\be\label{equation-11}h_0=\frac{c_{m}}{2\gamma_0}\lim_{y\to 0}y^{a_0}\overbrace{\partial_y \left[ y^{-1}\partial_y(y^{-1}\ldots (y^{-1}\partial_y U_{m}))\right]}^{m+1 \mbox{ derivatives}},\ee
where the constant  $c_m$ is defined in \eqref{constant-c}, and $P_{\gamma_0}f_0=h_0$. The proof goes by induction on $m$. The case $m=0$ is precisely the conclusion of theorem \ref{thm-zero}.
Assume that it is true for $m-1$, i.e.,
\be\label{equation-10}h_0=\frac{c_{m-1}}{2\gamma_0}\lim_{y\to 0}y^{a_0}\overbrace{\partial_y \left[ y^{-1}\partial_y(y^{-1}\ldots (y^{-1}\partial_y U_{m-1}))\right]}^{m \mbox{ derivatives}}.\ee
Now substitute \eqref{formula21} in \eqref{equation-10} and use that $c_{m}=c_{m-1}(1+a_m)$. We immediately obtain \eqref{equation-11}.

Next, we recall relation \eqref{relation-1} between $h_m$ and $h_0$. The definition of the operator $P_\gamma f= d_\gamma h_m$ and \eqref{equation-11} complete the proof of the theorem.
\end{proof}

%-------------------------------------------------------------------------------------------------------
%-------------------------------------------------------------------------------------------------------

\section{The extension problem on c.c. Einstein manifolds}\label{section-general-manifolds}

\setcounter{equation}{00}

We fix $\gamma\in\lp 0,\frac{n}{2}\rp\backslash\mathbb N$.  In this section we discuss the generalization of theorems \ref{thm-zero}, \ref{thm-extension1} and \ref{thm-extension2} from hyperbolic space to any conformally compact Einstein manifold $(X,g^+)$.

First we write the extension problem analogous to \eqref{equation-CS} in a conformally compact Einstein metric. The resulting problem \eqref{div} is still of divergence-type, degenerate elliptic, and with a weight in the Muckenhoupt class $A_2$ (c.f. \cite{Muckenhoupt:Hardy}), but some lower order terms appear - they depend on the underlying geometry. This is the content of lemma \ref{lemma-any-extension}.

Next,  we point out in lemma \ref{lemma-special-defining} that, by choosing a suitable defining function $\rho^*$, which is related to the eigenfunction of the Laplacian of the Einstein metric, the equation \eqref{equation-extension} of the extension theorem on general conformal compact Einstein manifolds is the same as the extension theorem on hyperbolic space studied in section 3, of pure divergence form.

Finally, in theorems  \ref{thm-new-extension} and \ref{thm-new-extension2}, we show how the extension problem with respect to this new defining function allows to compute the fractional Paneitz operator $P_\gamma$.

\begin{lemma} \label{lemma-any-extension}
Let $(X,g^+)$ be any conformally compact Einstein manifold with boundary $M$. For any defining function $\rho$ of $M$ in $X$, not necessarily geodesic, the equation
\be\label{GZ}-\Delta_{g^+} u-s(n-s)u=0 \quad \mbox{in } (X,g^+),\ee
is equivalent to
\be\label{div}-\divergence \lp \rho^a \nabla U\rp + E(\rho) U=0\quad \mbox{in } (X,\bar g),\ee
where
$$\bar g={\rho^2} g^+,\quad U= \rho^{s-n} u$$
and the derivatives in \eqref{div} are taken with respect to the metric $\bar g$. The lower order term is given by
\be\label{Error}
E(\rho)=-\Delta_{\bar g}\lp \rho^{\frac{a}{2}} \rp \rho^{\frac{a}{2}} +\lp\gamma^2-\tfrac{1}{4}\rp \rho^{-2+a}+\tfrac{n-1}{4n} R_{\bar g} \rho^a,\ee
or writing everything back in the metric $g^+$,
\be\label{E1}E(\rho)=-\Delta_{g^+} (\rho^{\frac{n-1+a}{2}}) \rho^{\frac{-n-3+a}{2}}-\lp \tfrac{n^2}{4}-\gamma^2\rp \rho^{-2+a}.\ee
Here we denote $s=\frac{n}{2}+\gamma$, $a=1-2\gamma$.
\end{lemma}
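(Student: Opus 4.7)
The plan is to mimic, in this more general setting, the conformal-Laplacian manipulation used at the start of the proof of Theorem \ref{thm-zero}. The argument should require no normal form for $\rho$ and no special structure of $g^+$ beyond the Einstein condition $Ric[g^+]=-ng^+$, which enters only through the value of the scalar curvature $R_{g^+}=-n(n+1)$.

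First, I would rewrite the eigenvalue equation in terms of the conformal Laplacian $L_{g^+}=-\Delta_{g^+}+\tfrac{n-1}{4n}R_{g^+}$. Since $s(n-s)=\tfrac{n^2}{4}-\gamma^2$ and $\tfrac{n-1}{4n}R_{g^+}=-\tfrac{n^2-1}{4}$, a direct computation (in complete analogy with \eqref{formula40}) gives
\[ 0=-\Delta_{g^+}u-s(n-s)u=L_{g^+}u+\lp\gamma^2-\tfrac{1}{4}\rp u. \]
Next, the conformal covariance of $L$ under $\bar g=\rho^2 g^+$, namely $L_{g^+}(\psi)=\rho^{(n+3)/2}L_{\bar g}(\rho^{-(n-1)/2}\psi)$ (the general version of \eqref{conformal-laplacian1}), applied with $\psi=u=\rho^{n-s}U$, converts this into
\[ \rho^{a/2}L_{\bar g}(\rho^{a/2}U)+\lp\gamma^2-\tfrac{1}{4}\rp\rho^{a-2}U=0, \]
where one uses the clean exponent identities $-(n-1)/2+(n-s)=a/2$ and $(n-s)-(n+3)/2+a/2=a-2$ forced by $s=n/2+\gamma$ and $a=1-2\gamma$.

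To put this in divergence form I would expand $L_{\bar g}=-\Delta_{\bar g}+\tfrac{n-1}{4n}R_{\bar g}$ and invoke the Leibniz identity
\[ \rho^{a/2}\Delta_{\bar g}(\rho^{a/2}U)=\divergence(\rho^a\nabla U)+\rho^{a/2}\lp\Delta_{\bar g}\rho^{a/2}\rp U, \]
whose cross term $a\rho^{a-1}\nabla\rho\cdot\nabla U$ is exactly the inhomogeneous part of $\divergence(\rho^a\nabla U)$. Substituting back immediately yields \eqref{div} together with the first stated form \eqref{Error} of $E(\rho)$.

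Finally, to obtain the equivalent expression \eqref{E1}, I would apply the conformal covariance of $L$ once more, now to $\phi=\rho^{a/2}$, obtaining $L_{\bar g}(\rho^{a/2})=\rho^{-(n+3)/2}L_{g^+}(\rho^{(n-1+a)/2})$. After multiplying by $\rho^{a/2}$ and unpacking the two conformal Laplacians in terms of $\Delta$ and $R$, the term $\tfrac{n-1}{4n}R_{\bar g}\rho^a$ on the $\bar g$-side and the contribution $-\tfrac{n^2-1}{4}\rho^{a-2}$ on the $g^+$-side combine with the leftover $(\gamma^2-\tfrac{1}{4})\rho^{a-2}$ to give the advertised coefficient $-(n^2/4-\gamma^2)\rho^{a-2}$. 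The only real difficulty is exponent bookkeeping: keeping track of the various powers of $\rho$ coming from $s$, $\gamma$, $a$, and $n$, and checking that the two copies of $(n^2-1)/4$ produced by the two conformal changes cancel cleanly against the Einstein scalar curvature, leaving precisely the weights $\rho^a$ and $\rho^{a-2}$ that appear in $E(\rho)$.
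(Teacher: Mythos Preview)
Your proposal is correct and follows essentially the same route as the paper: rewrite \eqref{GZ} as $L_{g^+}u+(\gamma^2-\tfrac14)u=0$, apply the conformal covariance of $L$ for $\bar g=\rho^2 g^+$ to obtain \eqref{GZ2}, and use the Leibniz identity $\rho^{a/2}\Delta_{\bar g}(\rho^{a/2}U)=\divergence(\rho^a\nabla U)+\Delta_{\bar g}(\rho^{a/2})\rho^{a/2}U$ to put it in divergence form. Your final paragraph deriving \eqref{E1} via a second application of conformal covariance to $\phi=\rho^{a/2}$ is a detail the paper omits; note only that in that step the $R_{\bar g}$ terms cancel among themselves and a \emph{single} factor $-\tfrac{n^2-1}{4}$ (from unpacking $L_{g^+}$) combines with $(\gamma^2-\tfrac14)$ to give $-(n^2/4-\gamma^2)$.
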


\begin{remark}
For the model case  $X=\mathbb R^{n+1}_+$, $M=\mathbb R^n$, $g^+=\frac{dy^2+\abs{dx}^2}{y^2}$, with the defining function $y>0$, $\bar g=dy^2+\abs{dx}^2$, it automatically follows from \eqref{Error} that
$$E(y)\equiv 0.$$
\end{remark}

\begin{proof}
Note that \eqref{GZ} is equivalent to
\be\label{GZ1} L_{g^+} u+ \lp \gamma^2-\tfrac{1}{4}\rp u=0,\ee
using the fact that for an Einstein metric $g^+$,
$$L_{g^+}=-\Delta_{g^+}-\frac{n^2-1}{4}.$$
On the other hand, the invariance of the conformal Laplacian reads:
$$L_{g^+}(\phi)=\rho^{\frac{n+3}{2}} L_{\bar g}\lp \rho^{-\frac{n-1}{2}}\phi\rp$$
for the change of metric $\bar g={\rho^2} g^+$. Thus, writing everything in terms of $\bar g$ and the new function $U=\rho^{s-n} u$, then \eqref{GZ1} is just
\be\label{GZ2} L_{\bar g} \lp \rho^{\frac{a}{2}} U\rp+\lp \gamma^2-\tfrac{1}{4}\rp \rho^{-2+\frac{a}{2}} U=0.\ee
Next, it is a straightforward computation to check that:
$$\rho^{\frac{a}{2}}\Delta_{\bar g} \lp \rho^{\frac{a}{2}} U\rp=\divergence_{\bar g} \lp \rho^a \nabla_{\bar g} U\rp+\Delta_{\bar g}\lp \rho^{\frac{a}{2}}\rp \rho^{\frac{a}{2}}U.$$
Substituting the above expression into \eqref{GZ2} gives the desired result \eqref{div}.\\
\end{proof}

Let $(X,g^+)$ be a conformally compact Einstein metric with boundary $(M,[\hat g])$. Then, lemma 2.1 in \cite{Graham:volume-area} gives that, fixed a metric $\hat g$ on the boundary $M$, there exists a unique defining function $y$ in $X$ such that in the neighborhood $M\times(0,\delta)$,  the metric splits as
\be\label{splitting} g^+=\frac{dy^2+g_y}{y^2},\ee
where $g_y$ is a one-parameter family of metrics on $M$ with $g_y|_{y=0}=\hat g$.
Moreover,
\be\label{taylor}
g_y=g^{(0)}+\frac{g^{(2)}}{2}y^2+\ldots\ee
only contains even terms up to order $n$. We write
\be\label{expansion1} g^{(0)}:=\hat g,\quad g^{(1)}:=\partial_y g_y|_{y=0}=0,\quad g^{(2)}:=\partial_{yy} g_y|_{y=0}.\ee
Set $\bar g=y^2 g^+$ and
\be\label{J}\Psi:=\partial_y \log \det(g_y)=\sum_{i,j}g_y^{ij}\partial_y (g_y)_{ij}.\ee
Note that
$$\Psi_0:=\frac{1}{2n} \Psi|_{y=0}=\frac{1}{2n}trace_{\hat g}( g^{(1)})$$ is the mean curvature of $M$ as hypersurface of $(X,\bar g)$, which is zero by \eqref{expansion1}.\\

Lemma \ref{lemma-any-extension} is true for any defining function independent of the behavior near the boundary. However, in the specific case that we have the splitting \eqref{splitting}, then we can have a more explicit expression for the lower order terms $E(\rho)$:

\begin{thm} If the defining function (denoted by $y$ in the following) is chosen such that metric splits as \eqref{splitting} in a neighborhood $M\times(0,\delta)$, then
$$E(y)=\tfrac{-n+1+a}{4} \Psi y^{a-1}=\tfrac{n-1-a}{4n} R_{\bar g} y^a \quad\mbox{in } M\times(0,\delta).$$
And, in particular,
$$\lim_{y\to 0} \frac{E(y)}{y^a}=\frac{-n+1+a}{4} \,\trace_{g^{(0)}} g^{(2)}.$$
Moreover, formula \eqref{Neumann} for the calculation of the conformal fractional Laplacian is still true, i.e.,
$$P_\gamma[g^+,\hat g] f=\frac{d_\gamma}{2\gamma_0} A_m^{-1} \lim_{y\to 0}y^{a_0}\partial_y\left[y^{-1}\partial_y \lp y^{-1}\partial_y\lp\ldots y^{-1}\partial_y U\rp\rp \right],$$
where are $m+1$ derivatives in the formula above, and $U$ is the solution of the extension problem
\be\label{div1}\left\{\begin{split}
-\divergence \lp y^a \nabla U\rp + E(y) U &=0 \quad\mbox{in }(X,\bar g), \\
U&=f \quad\mbox{on }M;
\end{split}\right.\ee
here the derivatives are taken with respect to the metric $\bar g=y^2 g^+$, and the constants are $M=[\gamma]$, $\gamma_0=\gamma-m$, $a_0=1-2\gamma_0$, $A_m$ is given in \eqref{A_m} and $d_\gamma$ in \eqref{P-operator}.
\end{thm}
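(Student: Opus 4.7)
For the first part (the formula for $E(y)$), I would begin from the representation \eqref{E1} and compute $\Delta_{g^+}(y^\alpha)$ directly from the normal form $g^+=y^{-2}(dy^2+g_y)$.  Since $\sqrt{|g^+|}=y^{-(n+1)}\sqrt{|g_y|}$, for any $f=f(y)$ depending only on $y$ one gets
\begin{equation*}
\Delta_{g^+} f = -(n-1)\, y f'(y) + y^2 f''(y) + \tfrac{y^2}{2}\Psi\, f'(y).
\end{equation*}
Applied to $f=y^\alpha$ with $\alpha=(n-1+a)/2$, the factor $\alpha(\alpha-n)$ collapses to $\gamma^2-n^2/4$ (using $a=1-2\gamma$), so $\Delta_{g^+}(y^\alpha)=(\gamma^2-n^2/4)\,y^\alpha+\tfrac{\alpha}{2}\Psi\, y^{\alpha+1}$. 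Plugging into \eqref{E1}, the $y^{a-2}$ terms cancel identically---precisely why the flat model satisfies $E(y)\equiv 0$---and only a term proportional to $\Psi\, y^{a-1}$ survives. The equivalent expression in terms of $R_{\bar g}$ then follows from the identity $R_{\bar g}=-n\Psi/y$, obtained by applying the conformal-change formula for scalar curvature to $\bar g=y^2 g^+$ together with the Einstein condition $R_{g^+}=-n(n+1)$. The limit at $y=0$ is immediate from \eqref{taylor} and $g^{(1)}=0$ in \eqref{expansion1}, which give $\Psi=y\,\trace_{g^{(0)}} g^{(2)}+O(y^3)$.

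For the second part (the formula for $P_\gamma$), the strategy is to mirror the proof of theorem \ref{thm-extension2}, but with the flat identity $\Delta U_m=-\tfrac{a_m}{1+a_m}U_{m-1}$ (no longer available in the curved setting) replaced by a direct analysis of the asymptotic expansion. By lemma \ref{lemma-any-extension}, the function $U:=y^{s-n}u$, where $u$ solves the scattering problem with $s=n/2+\gamma$ and Dirichlet data $f$, satisfies the extension problem \eqref{div1}; and the Graham-Zworski expansion $u=F y^{n-s}+H y^s$, combined with the even Taylor expansion \eqref{taylor} of $g_y$, yields
\begin{equation*}
U = f+f_2 y^2+f_4 y^4+\ldots+y^{2\gamma}\lp h+h_2 y^2+\ldots\rp,
\end{equation*}
with $P_\gamma[g^+,\hat g]f=d_\gamma h$. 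Applying $m$ iterations of $y^{-1}\partial_y$ (which sends $y^\beta$ to $\beta y^{\beta-2}$), then one outer $\partial_y$, and finally the weight $y^{a_0}$, I would track each monomial separately. The key term $y^{2\gamma} h$ contributes exactly $2\gamma_0 A_m h$ in the $y\to 0$ limit (collecting the telescoping product $2\gamma_m\cdot 2\gamma_{m-1}\cdots 2\gamma_1$); pure even powers $y^{2k}$ either vanish after $m$ iterations of $y^{-1}\partial_y$ (when $k<m$) or, after $\partial_y$ and the weight, survive only as positive powers of $y$; and the subleading $y^{2\gamma+2k}h_{2k}$ with $k\geq 1$ likewise give positive powers of $y$ that disappear in the limit.

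The main technical obstacle is justifying the asymptotic expansion of $U$ to the order needed for the term-by-term analysis. For conformally compact Einstein manifolds in normal form, the Graham-Zworski scattering theory produces this expansion recursively from the two independent pieces of Cauchy data $f$ and $h$; when $n$ is even, a log term appears at the critical order $y^n$, but since $\gamma<n/2$ the monomials $y^{2\gamma+2k}$ that actually enter the limit calculation stay strictly below this order, so the analysis goes through unchanged. A secondary point is the well-posedness of \eqref{div1}, which is needed to identify $U$ coming from scattering with the unique solution of the extension problem; this is ensured by the weight $y^a$ belonging to the Muckenhoupt class $A_2$ and by the lower-order term $E(y)$ having the correct behavior computed in Part 1.
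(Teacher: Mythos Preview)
Your proposal is correct and aligns with the paper's approach. For the first part, the paper computes from \eqref{Error} using $\Delta_{\bar g}=\partial_{yy}+\tfrac{1}{2}\Psi\partial_y+\Delta_{g_y}$, while you compute from the equivalent formula \eqref{E1} using $\Delta_{g^+}$; both routes need the identity $R_{\bar g}=-n\Psi y^{-1}$ (which the paper derives from the Yamabe equation for the conformal change $g^+=y^{-2}\bar g$, exactly as you indicate), and both are equally direct. For the second part, the paper's proof actually stops after establishing the formula for $E(y)$ and its limit---the ``Moreover'' clause is left to the reader. Your monomial-by-monomial analysis under the operator $B=y^{-1}\partial_y$, exploiting the even expansion \eqref{taylor} so that $U$ has the form $\sum f_{2k}y^{2k}+y^{2\gamma}\sum h_{2k}y^{2k}$, is precisely the technique the paper deploys later in the proof of theorem \ref{thm-new-extension2}; so you are supplying the omitted argument with the paper's own tools, and the bookkeeping (the killing of $y^{2k}$ for $k<m$, the positive residual powers for $k\ge m$, and the surviving coefficient $2\gamma_0 A_m$ from the $y^{2\gamma}h$ term) is correct.
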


\begin{proof} The first assertion is a straightforward calculation from \eqref{Error}:  we know that near $\{y=0\}$, the metric $\bar g$ can be split as $\bar g=dy^2+g_y$. Then
$$\Delta_{\bar g}=\partial_{yy}+\tfrac{1}{2} \Psi\partial_y+\Delta_{g_y}$$
for $\Psi$ as given in \eqref{J}. Moreover, for the conformal change $g^+=y^{-2}\bar g$, we can write the scalar curvature equation
$$-\Delta_{\bar g}\lp y^{-\frac{N-2}{2}}\rp+ c_N R_{\bar g} y^{-\frac{N-2}{2}}=c_N \lp y^{-\frac{N-2}{2}}\rp^{\frac{N+2}{N-2}} R_{g^+}, \quad c_N= \frac{N-2}{4(N-1)},\quad N=n+1.$$
Since $R_{g^+}=-N(N-1)$ we quickly obtain that
$$R_{\bar g}=-n\Psi y^{-1}.$$
With all these ingredients, computing all the terms in \eqref{Error}, we can show that near $M$,
$$E(y)=\tfrac{-n+1+a}{4} \Psi y^{a-1}=\tfrac{n-1-a}{4n} R_{\bar g} y^a.$$

Now we compute the asymptotic behavior of $E(y)$ when $y\to 0$.
Note that for (even) Poincar\'e metrics, we have the expansion \eqref{taylor}, and thus,
$$\lim_{y\to 0} \frac{E(y)}{y^a}=\frac{-n+1+a}{4} \,\trace_{g^{(0)}} g^{(2)}.$$
\end{proof}

\begin{remark}\label{remark-curvature}
Before we continue, we remind the reader of how to compute the $Q_\gamma[g^+,\hat g]$ curvature, as defined in \eqref{Q-curvature}, for $\gamma\in\lp 0,\frac{n}{2}\rp\backslash \mathbb N$, $s=\frac{n}{2}+\gamma$. We set $f\equiv 1$, and find the solution to the Poisson problem $-\Delta_{g^+} v-s(n-s)v=0$ such that
$$v=Fy^{n-s}+H y^s,\quad F|_{y=0}=1,\quad  H|_{y=0}=h.$$
Then,
$$Q_\gamma[g^+,\hat g]=d_\gamma h.$$
\end{remark}

Next, we show that it is possible to find a special defining function satisfying that the zero-th order term $E(\rho)$ in equation \eqref{div} vanishes so that the extension problem is a pure divergence equation similar to the Euclidean one \eqref{equation-CS} studied in section \ref{extension}. We recover the conformal powers of the Laplacian as the Dirichlet-to-Neumann operator from theorem \ref{thm-zero} (or theorems \ref{thm-extension1}, \ref{thm-extension2}), plus a curvature term.

\begin{lemma}\label{lemma-special-defining}
Let $(X,g^+)$ be a conformally compact Einstein manifold with conformal infinity $(M,[\hat g])$. Fixed a metric $\hat g$ on $M$, assume that  $y$ is the defining function on $X$ such that on a neighborhood $M\times(0,\delta)$, the metric splits as
$g^+=\frac{dy^2+g_y}{y^2}$, where $g_y$ is a one-parameter family of metrics on $M$ with $g_y|_{y=0}=\hat g$, and Taylor expansion \eqref{taylor}. For each $\gamma\in(0,1)$, there exists another (positive) defining function $\rho^*$ on $M\times(0,\delta)$, satisfying  $\rho^*=y+O(y^{2\gamma+1})$, and such that for the term $E$ defined in \eqref{Error} we have
$$E(\rho^*)\equiv 0.$$
Moreover, the metric $g^*=(\rho^*)^2g^+$ satisfies $g^*|_{y=0}=\hat g$ and has asymptotic expansion
\be\label{asymptotics-g*}g^*=(d\rho^*)^2\left[ 1+O((\rho^*)^{2\gamma})\right]+\hat g\left[1+O((\rho^*)^{2\gamma})\right].\ee
\end{lemma}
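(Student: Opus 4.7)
The key observation is that $E(\rho)$ admits a clean rewriting. Using $s = n/2+\gamma$, $a = 1-2\gamma$, one checks $(n-1+a)/2 = n-s$, $(-n-3+a)/2 = -s-1$, $-2+a = -1-2\gamma$, and $n^2/4 - \gamma^2 = s(n-s)$. Plugging these into \eqref{E1} and factoring out $\rho^{-s-1}$,
\begin{equation*}
E(\rho) = \rho^{-s-1}\bigl[-\Delta_{g^+}(\rho^{n-s}) - s(n-s)\rho^{n-s}\bigr].
\end{equation*}
Hence $E(\rho^*) \equiv 0$ on a neighborhood of $M$ is \emph{equivalent} to $(\rho^*)^{n-s}$ solving the scattering equation \eqref{equation-GZ}. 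The construction is thereby forced: let $v$ be the solution of $-\Delta_{g^+} v - s(n-s)v = 0$ provided by \eqref{general-solution} with Dirichlet data $F|_{y=0} \equiv 1$, so that $v = Fy^{n-s} + Hy^s = y^{n-s}(F + y^{2\gamma}H)$ with $F = 1 + O(y^2)$, $H = h + O(y^2)$, and $h = Q_\gamma[g^+,\hat g]$. Since the bracketed factor tends to $1$ at the boundary, I set $\rho^* := v^{1/(n-s)}$ on a neighborhood $M \times (0,\delta)$, where it is positive.

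A binomial expansion then yields
\begin{equation*}
\rho^* = y\bigl(1 + \tfrac{h}{n-s}y^{2\gamma} + O(y^2)\bigr)^{1} = y + \tfrac{h}{n-s}y^{2\gamma+1} + O(y^3),
\end{equation*}
so $\rho^*$ is a legitimate defining function with the stated leading asymptotic, and $E(\rho^*)\equiv 0$ by construction. To verify the form of $g^* := (\rho^*)^2 g^+$, write $\phi := \rho^*/y = 1 + O(y^{2\gamma})$ and
\begin{equation*}
g^* = \phi^2(dy^2 + g_y).
\end{equation*}
It is immediate that $g^*|_{y=0} = \hat g$. Inverting $\rho^* = y\phi(x,y)$ by the implicit function theorem yields $y = \rho^*\bigl(1 + O((\rho^*)^{2\gamma})\bigr)$. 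Since $d\phi = O(y^{2\gamma-1})\,dy + O(y^{2\gamma})\,dx$, one gets $dy = d\rho^*\bigl(1 + O((\rho^*)^{2\gamma})\bigr) + O((\rho^*)^{2\gamma+1})\,dx$, and combined with $g_y = \hat g + O(y^2) = \hat g + O((\rho^*)^{2\gamma})$ (using $\gamma < 1$), this produces the asymptotic form \eqref{asymptotics-g*}.

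The main subtlety I expect to manage is the low regularity of $v$ at the boundary: for non-integer $\gamma$, $v$ is polyhomogeneous rather than smooth up to $M$, and the two sequences of terms $y^{2k}$ and $y^{2\gamma+2k}$ in its expansion must be tracked simultaneously. Consequently, $\rho^*$ inherits this polyhomogeneous structure and the expansion $\rho^* = y + O(y^{2\gamma+1})$ should be interpreted in this sense. The existence and expansion theory of Mazzeo--Melrose \cite{Mazzeo-Melrose:meromorphic-extension} and Graham--Zworski \cite{Graham-Zworski:scattering-matrix} provides precisely the polyhomogeneous structure of $v$ required to justify the manipulations above, so no essential difficulty arises.
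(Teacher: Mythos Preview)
Your proof is correct and follows essentially the same route as the paper: set $\rho^* = v^{1/(n-s)}$ where $v$ solves the scattering problem with Dirichlet data identically $1$, then verify the asymptotics. Your explicit rewriting of \eqref{E1} as $E(\rho) = \rho^{-s-1}\bigl[-\Delta_{g^+}(\rho^{n-s}) - s(n-s)\rho^{n-s}\bigr]$ is exactly the mechanism the paper invokes when it says that $E(\rho^*)=0$ follows ``quickly'' from \eqref{E1}, and your treatment of the $g^*$ asymptotics is more detailed than the paper's, which simply asserts the expansion can be ``easily checked.'' One small slip: you write $h = Q_\gamma[g^+,\hat g]$, but by definition $Q_\gamma = P_\gamma 1 = d_\gamma h$; this does not affect the lemma since $Q_\gamma$ plays no role in its statement. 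Also, in your displayed binomial expansion the exponent on the bracket appears as $1$ rather than $1/(n-s)$, though the outcome you record is correct.
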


\begin{proof}
We solve the eigenvalue problem \eqref{equation-GZ} with Dirichlet condition \eqref{general-solution} given by $f\equiv 1$, and $s=\frac{n}{2}+\gamma$. The solution can be written as $v=y^{n-s}F+y^s H$ for
$$F=1+O(y^2), \quad H=h+O(y^2).$$
We set
\be\label{rho*}\rho^*:=v^{1/(n-s)};\ee
we emphasize that $\rho^*$ is chosen as a power of the eigenfunction of $\Delta_{g^+}$ at $s=\frac{n}{2}+\gamma$.
We check now that this $\rho^*$ satisfies all the properties stated in the lemma.

First note that $v$ solves the equation $-\Delta_{g^+} v-s(n-s) v=0$. This quickly implies the vanishing of $E(\rho^*)$ if we use the equivalent formula \eqref{E1}. Next, the asymptotic behavior of $\rho^*$ is precisely
\be\label{asymptotics-rho*}\rho^*(y)=y\left[1+\tfrac{1}{n-s}h y^{2\gamma}+O(y^2)\right].\ee
Moreover, because of remark \ref{remark-curvature}, we precisely have $d_\gamma h=Q_\gamma[g^+,\hat g]$. On the other hand, the asymptotic expansion \eqref{asymptotics-g*}  for $g^*$ can be easily checked from \eqref{asymptotics-rho*} and the asymptotic behavior of $\bar g=dy^2+g_y(x)$, $g_y=\hat g+O(y^2)$.
\end{proof}

\begin{lemma} The $\rho^*$ constructed in the previous lemma can be defined in the whole $X$ and it is indeed positive and smooth.
\end{lemma}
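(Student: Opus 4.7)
The plan is to show that the function $v = (\rho^*)^{n-s}$ appearing in Lemma 4.4 extends globally to a smooth positive function on $X$; the claimed properties of $\rho^*$ then follow by taking the $1/(n-s)$-th power, where $n-s = n/2 - \gamma > 0$.

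First I would handle global existence and smoothness. Under the standing hypothesis that $s(n-s)$ avoids the pure point spectrum of $-\Delta_{g^+}$ and $s = n/2 + \gamma$ is not a trivial pole of the scattering operator, the Mazzeo--Melrose construction produces a unique global solution $v$ of the eigenvalue equation $-\Delta_{g^+} v - s(n-s) v = 0$ on $X$, with the prescribed polyhomogeneous expansion $v = F y^{n-s} + H y^s$, $F|_{y=0} = 1$, in the collar $M \times (0,\delta)$. Interior elliptic regularity gives $v \in C^\infty(X)$, and on the open set $\{v > 0\}$ the positive power $v^{1/(n-s)}$ is smooth as well. So the key remaining step is pointwise positivity of $v$.

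For positivity, the leading behavior $v = y^{n-s}(1 + O(y^{2\gamma}))$ shows $v > 0$ in some collar neighborhood of $\partial X$. To extend this to all of $X$, I would argue by contradiction, exploiting the subcriticality of the eigenvalue $\lambda := s(n-s) = n^2/4 - \gamma^2$, which lies strictly below the bottom $n^2/4$ of the essential $L^2$-spectrum of $-\Delta_{g^+}$. If $v$ vanished at an interior point, let $U$ be a connected component of $\{v < 0\}$; since $v > 0$ near $\partial X$, the set $U$ is relatively compact in $X$, and on $U$ the function $-v$ is a positive Dirichlet eigenfunction of $-\Delta_{g^+}$ with eigenvalue $\lambda$. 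This forces $\lambda$ to equal the first Dirichlet eigenvalue $\mu_1(U)$. Monotonicity of Dirichlet eigenvalues, combined with the spectral gap below $n^2/4$ furnished by our hypothesis that $\lambda$ avoids the point spectrum of $-\Delta_{g^+}$ on $X$, would then yield a contradiction.

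The main obstacle is exactly this positivity step, where one must rule out the delicate possibility that $\lambda$ coincides with a Dirichlet eigenvalue on some interior subdomain. A more streamlined alternative, which I would likely favor, is to invoke positivity of the Poisson kernel $\mathcal{P}(s)$ on conformally compact Einstein manifolds for $\mathrm{Re}(s) > n/2$: since $v = \mathcal{P}(s)(1)$ is the Poisson extension of the strictly positive boundary datum $f \equiv 1$, it must itself be strictly positive on all of $X$. Either route produces a smooth positive $v$ globally on $X$, and hence a smooth, positive defining function $\rho^* = v^{1/(n-s)}$ on $X$, completing the proof.
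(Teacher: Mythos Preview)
Your proposal is on the right track and matches the paper's approach in spirit. The paper's own proof is extremely brief: it simply invokes ``the maximum principle and uniqueness for equation \eqref{GZ}'', pointing to Gonz\'alez--Qing, Fabes--Kenig--Serapioni, and Cabr\'e--Sire for the underlying theory of degenerate elliptic equations with $A_2$ weights. Your second route---positivity of the Poisson operator $\mathcal P(s)$ applied to the datum $f\equiv 1$---is essentially this maximum-principle argument phrased differently; both rest on the same spectral fact, namely that $-\Delta_{g^+}-s(n-s)$ satisfies a maximum principle when $s(n-s)$ lies below the bottom of the $L^2$-spectrum.

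Your first route (the Dirichlet-eigenvalue contradiction) does have the gap you flag, and it is a real one: merely knowing that $s(n-s)$ avoids the point spectrum of $-\Delta_{g^+}$ on $X$ does not force $\mu_1(U)>s(n-s)$ for a compact subdomain $U$. Domain monotonicity gives only $\mu_1(U)\geq \lambda_0(X)$, and if there are $L^2$-eigenvalues below $n^2/4$ one could have $\lambda_0(X)<s(n-s)$ with no contradiction. So this route genuinely needs the additional spectral hypothesis $\lambda_0(X)\geq n^2/4$ (or at least $\lambda_0(X)>s(n-s)$), which is exactly what also underlies the Poisson-kernel positivity and the paper's maximum-principle claim. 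You are right to prefer the second route; it is the one the paper effectively uses, and you supply more of the mechanism than the paper does.
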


\begin{proof} A summary of properties for degenerate elliptic equations, necessary to deal with \eqref{GZ} or \eqref{div} can be found in Gonz\'alez-Qing \cite{fractional-Yamabe}. The classical reference on degenerate elliptic equations with Muckenhoupt weights is Fabes-Kenig-Serapioni \cite{Fabes-Kenig-Serapioni:local-regularity-degenerate}, while Cabr\'e-Sire \cite{Cabre-Sire:I} has retaken the study in relation to fractional Laplacians on Euclidean space.

Note that $v$ is strictly positive in the whole $X$, thanks to the maximum principle and uniqueness for equation \eqref{GZ}. This shows that $\rho^*$ is an acceptable defining function on the whole $X$.
\end{proof}

\begin{thm}\label{thm-new-extension}
Assume the same hypothesis as in lemma \ref{lemma-special-defining}. Fix $\gamma\in(0,1)$ and let $\rho^*$ be the special defining function constructed in lemma \ref{lemma-special-defining}. For each smooth function $f$ on $M$, let $U$ solve the extension problem
\be\label{equation-extension}\left\{
\begin{split}
-\divergence \lp (\rho^*)^a \nabla U\rp&=0 \quad \mbox{in }(X, g^*),\\
U&=f \quad\mbox{on }M,
\end{split}\right.\ee
where the derivatives are taken with respect to the metric $g^*=(\rho^*)^2 g^+$. Then
\be\label{general-manifold-new}
P_\gamma[g^+,\hat g] f=\frac{d_\gamma}{2\gamma} \lim_{\rho^*\to 0} (\rho^*)^a\partial_{\rho^*} U+f Q_\gamma[g^+,\hat g].\ee
The fractional order curvature $Q_\gamma[g^+,\hat g]$ is defined in remark \ref{remark-curvature}.
\end{thm}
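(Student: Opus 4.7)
The strategy is to reduce to lemma \ref{lemma-any-extension} with the special choice $\rho = \rho^*$, and then translate the information carried by $U$ back into the scattering expansion.

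First I would invoke lemma \ref{lemma-any-extension} together with lemma \ref{lemma-special-defining}: since $E(\rho^*) \equiv 0$, the divergence equation \eqref{equation-extension} is equivalent to $-\Delta_{g^+} u - s(n-s)u = 0$ in $X$, where $u := (\rho^*)^{n-s} U = v\, U$ and $v = (\rho^*)^{n-s}$ is the eigenfunction used to construct $\rho^*$ (with Dirichlet datum $1$). Standard scattering theory then guarantees that $u$ admits an expansion
\bee
u = y^{n-s} F_u + y^s H_u, \quad F_u|_{y=0} = \tilde f, \quad H_u|_{y=0} = h_u, \quad d_\gamma h_u = P_\gamma[g^+,\hat g]\tilde f,
\eee
in the geodesic defining function $y$ of lemma \ref{lemma-special-defining}. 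Similarly $v = y^{n-s} F_v + y^s H_v$ with $F_v|_{y=0} = 1$ and $d_\gamma h_v := d_\gamma H_v|_{y=0} = Q_\gamma[g^+,\hat g]$.

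Dividing, and using that the Fefferman-Graham expansion contains only even powers of $y$ up to order $n$, I would obtain
\bee
U = \frac{u}{v} = \frac{F_u + y^{2\gamma} H_u}{F_v + y^{2\gamma} H_v} = f + (h_u - f h_v)\, y^{2\gamma} + o(y^{2\gamma}).
\eee
Here the identification $\tilde f = f$ follows by evaluating at $y=0$ and using the Dirichlet datum $U|_M = f$; the competing terms $O(y^2)$ and $O(y^{4\gamma})$ are both $o(y^{2\gamma})$ for $\gamma \in (0,1)$.

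Next I would pass from the $y$ coordinate to the $\rho^*$ coordinate. By \eqref{asymptotics-rho*}, $\rho^* = y + O(y^{1+2\gamma})$, so $\partial_{\rho^*} = (1 + O(y^{2\gamma})) \partial_y$ and $(\rho^*)^a = y^a (1 + O(y^{2\gamma}))$. Differentiating the above expansion and using the critical identity $a + (2\gamma - 1) = 0$,
\bee
(\rho^*)^a \partial_{\rho^*} U = 2\gamma (h_u - f h_v) + o(1) \quad \mbox{as } y \to 0.
\eee
Multiplying by $d_\gamma / (2\gamma)$ and substituting $d_\gamma h_u = P_\gamma f$ and $d_\gamma h_v = Q_\gamma$ yields the claimed identity.

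The main technical issue is asymptotic bookkeeping: one must check that neither the $O(y^2)$ terms from $F_u, F_v$, nor the $O(y^{4\gamma})$ cross terms arising from the division, nor the change of variable between $y$ and $\rho^*$ in both the weight and the derivative, contribute to the limit. For $\gamma \in (0,1)$ all these corrections are of strictly higher order than the $y^{2\gamma-1}$ term in $\partial_y U$ after multiplication by $y^a$, so they vanish in the limit; no extra boundary term survives, which is consistent with the Einstein condition forcing the mean curvature of $M$ to vanish.
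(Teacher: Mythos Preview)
Your proof is correct and follows essentially the same route as the paper: identify $U=(\rho^*)^{s-n}u=u/v$, invoke lemma \ref{lemma-any-extension} together with $E(\rho^*)=0$, expand $U$ as a quotient of scattering expansions, and read off the coefficient of $y^{2\gamma}$. The only difference is cosmetic: the paper computes $\lim_{y\to 0} y^a\partial_y U$ directly in the geodesic coordinate $y$ and tacitly identifies it with the $\rho^*$ limit, whereas you carry out the change of variable $y\leftrightarrow\rho^*$ explicitly using \eqref{asymptotics-rho*}; your version is arguably cleaner since the statement is phrased in $\rho^*$.
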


\begin{proof} We set up the same notation as in the previous results.
On one hand, in order to compute the scattering operator with Dirichlet data $f$ we need to consider the equation \be\label{formula100}-\Delta_{g^+} u-s(n-s)u=0 \quad \mbox{in }(X,g^+).\ee There exists a solution $u=y^{n-s} F+y^s H$
with $F=f+0(y^2)$, $H=h+O(y^2)$. Then $P_\gamma f:= P_\gamma[g^+,\hat g]f$ is defined as $P_\gamma f=d_\gamma h$.

On the other hand, the $Q_\gamma$ curvature can be computed as indicated in remark \ref{remark-curvature}. Let $v$ be the solution of the  Poisson equation  \eqref{formula100}, but with Dirichlet data data identically $\tilde f\equiv 1$. It can be written as
$$v=y^{n-s} \tilde F+y^s \tilde H \quad \mbox{with} \quad \tilde F=1+0(y^2), \quad \tilde H=\tilde h+O(y^2).$$
Then $Q_\gamma:=P_\gamma 1=d_\gamma \tilde h$. In addition, $\rho^*=v^{1/(n-s)}$.

Now we claim that $U:=(\rho^*)^{s-n} u$ satisfies \eqref{equation-extension}. First, lemma \ref{lemma-any-extension} applied to the defining function $\rho^*$ gives that $U$ is a solution of \eqref{div}. Second, for our special choice of defining function, $E(\rho^*)=0$ thanks to lemma \ref{lemma-special-defining}. Finally, note that by construction,
$$U=\frac{u}{v}=\frac{F+y^{2s-n}H}{\tilde F+y^{2s-n}\tilde H}.$$
In particular, $U|_{y=0}=f$. This shows that $U$ is a solution of \eqref{equation-extension}, as claimed.

Next, let us compute $\partial_y U$. It is easy to check that
$$\lim_{y \to 0}y^a\partial_y U=(2\gamma)h-2\gamma f\tilde h,$$
and \eqref{general-manifold-new} follows. This completes the proof of theorem \ref{thm-new-extension}.
\end{proof}

Before we state the next result, we need to introduce some notations: for any smooth function $w$, we denote $w=O_E(1)$ if the function $w$ has only even terms in the expansion (up to order $n$), i.e,
$$w=w_0+w_1y^2+w_2y^4+\ldots$$
We also define the operator $B:=y^{-1}\partial_y$. We claim that for any $k=1,2,\ldots$, it is true that:
\begin{enumerate}
\item If $w=O_E(1)$, then also $B^k w=O_E(1)$.
\item If $w=hy^{l}O_E(1)$ for some function $h=h(x)$ and $l\in\mathbb N$, then
$$B^k w=l(l-2)\ldots(l-2(k-1))y^{l-2k}h+O(y^{l-2k+1}).$$
and
$$\partial_y B^k w=l(l-2)\ldots(l-2(k-1))(l-2k)y^{l-2k-1}h+O(y^{l-2k}).$$
\end{enumerate}

We show now that theorem \ref{thm-new-extension} is is still valid for any exponent $\gamma\in \lp 0,\frac{n}{2}\rp\backslash \mathbb N$, thus generalizing theorem \ref{thm-extension2} on hyperbolic space to any conformally compact Einstein manifold:

\begin{thm} \label{thm-new-extension2}
Fix $s=\frac{n}{2}+\gamma$ for $\gamma\in\lp 0,\frac{n}{2}\rp\backslash \mathbb N$. Set $\gamma=m+\gamma_0$, $m=[\gamma]$, $\gamma_0\in(0,1)$, $a=1-2\gamma$.
In the hypothesis of theorem \ref{thm-new-extension}, let $U$ be a solution of \eqref{equation-extension}
Then
$$P_\gamma[g^+,\hat g] f=\frac{d_\gamma}{2\gamma_0}A_m^{-1}\left[\lim_{\rho^*\to 0}(\rho^*)^{a_0}\partial_{\rho^*} B^m U \right]+f Q_\gamma[g^+,\hat g],$$
where $a_0=1-2\gamma_0$ and $A_m$ is given in \eqref{A_m}.
\end{thm}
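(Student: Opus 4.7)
The plan is a direct asymptotic analysis of $U$ near $\rho^*=0$, in the spirit of the direct calculation in theorem \ref{thm-new-extension} but carried out to higher order using the calculus rules for $B=(\rho^*)^{-1}\partial_{\rho^*}$ recorded just before the statement. The iterative device used in theorem \ref{thm-extension2} on hyperbolic space, where one passes through extension problems for all intermediate exponents $\gamma_j$, is not available in the curved setting because the special defining function $\rho^*$ is tuned specifically to the given $\gamma$; the vanishing $E(\rho^*)\equiv 0$ from lemma \ref{lemma-special-defining} is what allows one to bypass this and work only with the divergence equation \eqref{equation-extension}.

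First I would let $u$ be the scattering solution with Dirichlet data $f$ and $v$ the scattering solution with data identically one, so that $\rho^*=v^{1/(n-s)}$ and $U=u/v$ solves \eqref{equation-extension}, by lemma \ref{lemma-any-extension} together with $E(\rho^*)\equiv 0$. Using the expansion $\rho^*=y\bigl[1+\tfrac{\tilde h}{n-s}\,y^{2\gamma}+O(y^2)\bigr]$ from lemma \ref{lemma-special-defining} to re-expand $u$ in powers of $\rho^*$ as $u=F^*(\rho^*)^{n-s}+H^*(\rho^*)^s$, one matches the $(\rho^*)^{n-s}$ and $(\rho^*)^s$ coefficients to find
\begin{equation*}
F^*|_{\rho^*=0}=f,\qquad H^*|_{\rho^*=0}=h-f\tilde h,
\end{equation*}
where $h$ and $\tilde h$ are the usual subleading scattering coefficients, so that $P_\gamma f=d_\gamma h$ and $Q_\gamma = d_\gamma \tilde h$. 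The even expansion \eqref{taylor} of the Poincar\'e-Einstein metric through order $n$ guarantees that $F^*$ and $H^*$ are $O_E(1)$ in $\rho^*$ up to the relevant order, so $U = F^* + (\rho^*)^{2\gamma} H^*$ splits into an even piece plus a singular piece with anomalous exponent $2\gamma$.

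Next I would apply $B^m$: rule (1) gives $B^m F^* = O_E(1)$, while the same Leibniz calculation underlying rule (2), applied with $l = 2\gamma$, yields
\begin{equation*}
B^m\!\bigl[(\rho^*)^{2\gamma} H^*\bigr] = 2^m\gamma(\gamma-1)\cdots(\gamma-m+1)\,(\rho^*)^{2\gamma_0}(h-f\tilde h) + O\bigl((\rho^*)^{2\gamma_0+1}\bigr).
\end{equation*}
Hitting with $(\rho^*)^{a_0}\partial_{\rho^*}$: the even contribution is $O((\rho^*)^{a_0+1})=O((\rho^*)^{2-2\gamma_0})$, which vanishes in the limit $\rho^*\to 0$ since $\gamma_0<1$, while the singular contribution gives $2\gamma_0 A_m(h-f\tilde h)$ because $a_0+2\gamma_0-1=0$. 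Multiplying through by $\tfrac{d_\gamma}{2\gamma_0}A_m^{-1}$ produces $d_\gamma h - f\,d_\gamma \tilde h = P_\gamma f - f Q_\gamma$, which rearranges to the claimed identity.

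The main technical obstacle is to rigorously track the evenness of $F^*$ and $H^*$ through order $2m$ after the change of variables from $y$ to $\rho^*$. Evenness in $y$ holds through order $n$ by the expansion \eqref{taylor} of the even Poincar\'e-Einstein metric, and since $2m<n$ this is just enough; one must verify that the single anomalous $y^{2\gamma}$ correction entering $\rho^*$ is cleanly absorbed into the $(\rho^*)^{2\gamma}H^*$ piece of $U$ rather than polluting $F^*$ with fractional powers. Once this bookkeeping is under control, the limit computation and the identification of the constants $A_m$ and $d_\gamma$ are routine.
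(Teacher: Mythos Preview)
Your proposal is correct and reaches the same conclusion as the paper, but via a different organization of the asymptotics.

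The paper stays in the geodesic coordinate $y$ throughout: it writes $U=V/W$ with $V=F+y^{2\gamma}H$, $W=\tilde F+y^{2\gamma}\tilde H$, expands $B^m(VW^{-1})$ by the Leibniz rule, and observes that only the extreme terms $k=0$ and $k=m$ in $\sum_k c_{m,k}(B^kV)(B^{m-k}W^{-1})$ contribute to the limit. The combination $h-f\tilde h$ then emerges from those two cross terms. Your approach instead absorbs the quotient by passing to the variable $\rho^*$: since $(\rho^*)^{n-s}=v$, the ratio collapses to a sum $U=F^*+(\rho^*)^{2\gamma}H^*$, and matching the $y^{n-s}$ and $y^s$ coefficients in $u$ immediately yields $H^*|_0=h-f\tilde h$, so the surviving coefficient is already assembled before you apply $B^m$. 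What you buy is a cleaner endgame (no product rule, just rules (1) and (2) applied to a two-term sum); what the paper buys is that the evenness bookkeeping is trivial, because $F,H,\tilde F,\tilde H$ are known to be $O_E(1)$ in $y$ by the standard Poincar\'e--Einstein expansion, whereas you must check that the change $y\mapsto\rho^*=y(1+\tfrac{\tilde h}{n-s}y^{2\gamma}+O(y^2))$ preserves enough structure. You flag this correctly; in practice one sees that $U=V/W$ has an expansion in $y$ of the form $\sum_{k\ge 0} y^{2k\gamma}E_k(y^2)$ with each $E_k$ even, and all terms beyond $E_0(0)=f$ and $y^{2\gamma}E_1(0)=(h-f\tilde h)y^{2\gamma}$ are of strictly higher order and vanish in the limit after $B^m$ and $(\rho^*)^{a_0}\partial_{\rho^*}$, so the full evenness of $F^*,H^*$ in $\rho^*$ is not actually needed.
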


\begin{proof}
Let us compute
\bee Lim:=\lim_{y\to 0}y^{a_0}\partial_y (B^m U)\eee
for $U=(\rho^*)^{s-n} u$. As before, we can write
$$U=\frac{F+y^{2s-n}H}{\tilde F+y^{2s-n}\tilde H}=:\frac{V}{W},$$
where
$$F=f+y^2O_E(1), \tilde F=1+y^2O_E(1),\quad H=h+y^2O_E(1), \tilde H=\tilde h+y^2O_E(1).$$
We apply the product formula
$$B^m (V W^{-1})=\sum_{k=0}^m c_{m,k} (B^k V) (B^{m-k}(W^{-1})).$$
From this sum, only the terms $k=0$, $k=m$ are important, since the rest are of higher order in $y$.  Note that the term $k=m$ is just
\bee\begin{split}
(B^m V) W^{-1}&=(2s-n)(2s-n-2)\ldots(2s-n-2(m-1))y^{2s-n-2m}[h+y^2 O_E(1)]\\
&+O_E(1)+h.o.t.\end{split}\eee
so that
$$\partial_y \left[(B^m V) W^{-1}\right]=O(y)+2\gamma_0 A_m y^{2\gamma_0-1}h+h.o.t.$$
On the other hand, for the $k=0$ term we observe that
$$B^m(W^{-1})=-B^m(W)/W^2+h.o.t.,$$ and thus
$$\partial_y \left[V B^m(W^{-1})\right]=-y^{2\gamma_0-1}2\gamma_0 A_m\tilde h f+h.o.t.$$
Consequently
$$Lim=2\gamma_0A_m h-2\gamma_0A_m \tilde h f=\frac{2\gamma_0A_m}{d_\gamma}\left[P_\gamma f- f P_\gamma 1\right],$$
and the the proof of the proposition is completed after taking into account that $P_\gamma 1=Q_\gamma$.
\end{proof}

%-----------------------------------------------------------------------------------------------
%-----------------------------------------------------------------------------------------------

\section{The general case}

\setcounter{equation}{00}

Given a compact manifold $\bar X$ with boundary $M$, and a smooth metric $\bar g$ on $\bar X$, there exists an asymptotically hyperbolic metric with constant scalar curvature in the interior $X$, in the same conformal class of $\bar g$. This is the well known singular Yamabe problem, that has been well understood in a series of papers. We should cite Aviles-MacOwen \cite{Aviles-McOwen}, Mazzeo \cite{Mazzeo:regularity-singular-Yamabe}, Andersson-Chrusciel-Friedrich \cite{Andersson-Chrusciel-Friedrich}, in the case of negative constant scalar curvature.

We remark now that the construction of scattering operator $S(s)$ can be generalized to manifolds which are not Einstein, but just asymptotically hyperbolic (c.f. Mazzeo-Melrose \cite{Mazzeo-Melrose:meromorphic-extension} for most of the values of the parameter $s$, and Guillarmou \cite{Guillarmou:meromorphic} for a closer look at the remaining poles). In this section we try to understand how many of the results in the previous sections generalize to a compact manifold with boundary $(\bar X,\bar g)$, not necessarily conformally compact Einstein. For the rest of the section, $\gamma\in(0,1)$.

The case $\gamma=\frac{1}{2}$ was studied by Gillarmou-Gillop\'e \cite{Guillarmou-Guillope:determinant}. They considered the scattering operator of asymptotically hyperbolic manifold, and its relation to mean curvature. Note that $\gamma=\frac{1}{2}$ is a splitting point for the behavior of $P_\gamma$, as we will see in theorem \ref{thm-extension-general}.\\

Let $(\bar X,\bar g)$ be a smooth $(n+1)$-dimensional compact manifold with boundary $M^n=\partial X$. As we have mentioned above, there exists an asymptotically hyperbolic metric $g^+$ in the interior $X$, conformal to $\bar g$, and that has negative constant scalar curvature $R_{g^+}=-n(n+1)$. Moreover, $g^+$ has a very specific polyhomogeneous expansion. More precisely, let $\rho$ be a geodesic boundary defining function of $(\partial \bar X, \bar g)$, i.e.,
$$\bar g=d\rho^2+\bar g_\rho$$
for some one-parameter family of metrics $\bar g_\rho$ on $M$,  then we have that
\be\label{metric1}g^+=\frac{\bar g(1+\rho \alpha+\rho^n\beta)}{\rho^2},\ee
where $\alpha\in \mathcal C^{\infty}(\bar X)$, $\beta\in \mathcal C^\infty(X)$ and $\beta$ has a polyhomogeneous expansion
\be\label{beta}\beta(\rho,x)=\sum_{i=0}^\infty \sum_{j=0}^{N_i} \beta_{ij} \rho^i (\log \rho)^j\ee
near the boundary, $N_i\in\mathbb N\cup \{0\}$ and $\beta_{ij}\in\mathcal C^{\infty}(\bar X)$. Here we note that the $\log$ terms do not appear in the first terms of the expansion, and they can be ignore in our setting, because $\gamma\in(0,1)$ and we will not need such high orders.
We define
\be\label{rho-hat-rho}\frac{1}{\hat\rho^2}:=\frac{1+\rho \alpha+\rho^n\beta}{\rho^2},\ee
so that \eqref{metric1} is rewritten as
\be\label{metric11}g^+=\frac{\bar g}{\hat\rho^2},\ee

On the other hand, note that $\bar g_\rho$ may not only have even terms in its expansion. However, by the work of Graham-Lee \cite{Graham-Lee}, fixed the boundary metric $\hat g:= \bar g_\rho|_{\rho=0}=\bar g|_{M}$, we can find a boundary defining function $y=\rho+O(\rho^2)$ such that
\be\label{metric2}g^+=\frac{dy^2+g_y}{y^2}\ee
near $M$, where $g_y$ is a one-parameter family of metrics on $M$ such that $g_y|_{y=0}=\hat g$, with the regularity of $\rho \alpha+\rho^n\beta$. The main property of $g_y$ is that, if we make the expansion
$$g_y=g^{(0)}+g^{(1)} y+O(y^2),$$
then
\be\label{metric5}g^{(0)}=\hat g,\quad trace_{g^{(0)}} g^{(1)}=0.\ee
We set $\tilde g=dy^2+g_y$ so that
\be\label{metric10}g^+=\frac{\tilde g}{y^2}.\ee
The scattering operator for $(X,g^+)$ is computed as follows: first, solve the Poisson equation $-\Delta_{g^+}u-s(n-s)u=0$. For each $f\in\mathcal C^\infty(M)$, there exists a solution of the form
\be\label{metric7}u=y^{n-s}F+y^s H,\quad F=f+O(y^2), \quad H=h+O(y).\ee
Then, for $s=\frac{n}{2}+\gamma$, we define the conformal fractional Laplacian in this setting as
\be\label{metric6}P_\gamma[g^+,\hat g] f= d_\gamma h.\ee
In our case, we do have some log terms in the expansion \eqref{beta}. However, they do appear at order $n$, and consequently, they do not change the first terms in the asymptotic expansion for $u$.

We write
\be\label{hat-rho1}\hat \rho=y(1+y\phi+O(y^2))\ee
 for some $\phi\in\mathcal C^\infty(\partial X)$ which
 will be made precise later. Because the metrics \eqref{metric11} and \eqref{metric10} are equal, we can write $y^2\bar g=\hat \rho^2 \tilde g$. Then, restricting to the tangential direction, we get that
\be\label{metric3}(1+2y\phi+O(y^2)) g_y= \bar g_\rho.\ee
We write the Taylor expansions for $g_y$ and $\bar g_\rho$ in the variable $y$, taking into account that $\rho=y+O(y^2)$:
\be\label{Taylor-metric}\begin{split}
&g_y=g^{(0)}+g^{(1)} y+O(y^2), \quad g^{(0)}=\hat g, \quad g^{(1)}=0,\\
&\bar g_{\rho }=\bar g^{(0)}+\bar g^{(1)} y+O(y^2),\quad \bar g^{(0)}=\hat g,
\end{split}\ee
then looking at the orders of $y$ in \eqref{metric3}, we obtain
$$g^{(1)}+\phi \hat g=\bar g^{(1)}.$$
Taking trace above with respect to $\hat g$, and using \eqref{metric5}, we are able to find a formula for $\phi$. Indeed,
\be\label{phi}\phi=-\frac{1}{2n} trace_{\hat g} (\bar g^{(1)}).\ee
This is nothing but the mean curvature of $\partial X$ as a boundary of the $(n+1)$-manifold $(\bar X,\bar g)$, with a minus sign. We denote it by
\be\label{mean-curvature}\Psi_0:=\frac{1}{2n} trace_{\hat g} (\bar g^{(1)}).\ee\\

We have shown from \eqref{hat-rho1} and \eqref{phi} that
\be\label{hat-rho}\hat \rho=y(1-\Psi_0 y+O(y^2) ).\ee
We will need at a later section the relation between $\rho$ and $y$, so we indicate it here. First, from \eqref{rho-hat-rho} we know that
\be\label{rho-hat-rho2}\rho=\hat\rho\lp 1+\frac{\alpha}{2}\hat\rho+O(\hat\rho^2)\rp.\ee
If we substitute \eqref{hat-rho}, then
\be\label{y-rho}\rho=y\left[ 1+\lp -\Psi_0+\frac{\alpha}{2}\rp y+O(y^2)\right].\ee

In the following, we show in what cases lemma \ref{lemma-any-extension} for the calculation of $P_\gamma[g^+,\hat g]$ is still true:

\begin{thm}\label{thm-extension-general}
Let $(\bar X,\bar g)$ be a smooth, $(n+1)$-dimensional smooth, compact manifold with boundary, and let $\hat g$ be the restriction of the metric $\bar g$ to the boundary $M:=\partial X$. Let $\rho$ be a geodesic boundary defining function. Then there exists an asymptotically hyperbolic metric $g^+$ on $X$ of the form \eqref{metric1}, with respect to which the conformal fractional Laplacian  $P_\gamma[g^+,\hat g]$ can be defined as \eqref{metric6}, and it can be computed through the following extension problem:  for each smooth given function $f$ on $M$, consider
\be\label{div3}\left\{\begin{split}
-\divergence(\rho^a\nabla U)+E(\rho) &=0\quad\mbox{in }(\bar X,\bar g), \\
U&=f \quad \mbox{on }M,
\end{split}\right.\ee
where the derivatives are taken with respect to the original metric $\bar g$. Then, there exists a unique solution $U$ and moreover,
\begin{enumerate}
\item For $\gamma\in(0,\frac{1}{2})$,
\be\label{compute-fractional}P_\gamma[g^+,\hat g] f=\frac{d_\gamma}{2\gamma}\lim_{\rho\to 0}\rho^a \partial_\rho U,\ee

\item For $\gamma=\frac{1}{2}$, we have an extra term
$$P_{\frac{1}{2}}
[g^+,\hat g] f=\lim_{\rho\to 0} \partial_\rho U + \lp\tfrac{n}{2}-\tfrac{1}{2}\rp \Psi_0 f,$$
where $\Psi_0$ is the mean curvature of $M$ as defined in \eqref{mean-curvature}.

\item If $\gamma\in\lp\frac{1}{2},1\rp$, the limit in the right hand side of \eqref{compute-fractional} exists if and only if the the mean curvature $\Psi_0$ of the boundary $\partial X$ in $(\bar X,\bar g)$ vanishes identically, in which case, \eqref{compute-fractional} holds too.
\end{enumerate}
\end{thm}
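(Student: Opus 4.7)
The plan is to reduce the statement to an asymptotic analysis at the boundary, combining Lemma~\ref{lemma-any-extension} with the change of defining function from the Graham--Lee normal coordinate $y$ (adapted to $\hat g$) to the geodesic defining function $\rho$ (adapted to $\bar g$). First I apply Lemma~\ref{lemma-any-extension} with the defining function $\rho$: the conformal covariance of $L_{g^+}$ shows that $U := \rho^{s-n} u$ satisfies \eqref{div3}, where $u$ is the Poisson solution of $-\Delta_{g^+} u - s(n-s) u = 0$ with leading coefficient $f$ at infinity. Existence and uniqueness of $U$ in the appropriate $A_2$-weighted Sobolev space follow from the well-posedness of the scattering problem (Mazzeo--Melrose, Guillarmou) together with the Fabes--Kenig--Serapioni and Cabr\'e--Sire theory for degenerate elliptic equations.

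The core calculation is the asymptotic expansion of $U(\rho,\cdot)$ near the boundary. Using the relation \eqref{y-rho}, $\rho = y[1 + c_1 y + O(y^2)]$ with $c_1 := -\Psi_0 + \alpha/2$, together with $u = y^{n-s}(f + O(y^2)) + y^s(h + O(y))$, inverting the change of variable yields
$$U(\rho, \cdot) = f + (s-n)\,c_1\,f\,\rho + O(\rho^2) + h\,\rho^{2\gamma} + O(\rho^{2\gamma+1}),$$
and therefore
$$\rho^a\,\partial_\rho U = 2\gamma\, h + (s-n)\,c_1\,f\,\rho^{\,1-2\gamma} + O(\rho^{\min(1,\,2-2\gamma)}).$$

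The three cases then follow from the sign of the exponent $1-2\gamma$. When $\gamma \in (0,1/2)$ this exponent is positive, the $\rho^{1-2\gamma}$ term vanishes in the limit, and $\lim_{\rho\to 0}\rho^a\partial_\rho U = 2\gamma h$, which combined with $P_\gamma f = d_\gamma h$ gives \eqref{compute-fractional}. When $\gamma = 1/2$ the exponent is zero and the $\rho^{1-2\gamma}$ term contributes the finite pointwise multiple $(s-n)\,c_1\,f = -\tfrac{n-1}{2}(-\Psi_0+\tfrac{\alpha}{2})f$ of $f$; using the singular Yamabe identity for $\alpha|_M$ produced by $R_{g^+} = -n(n+1)$, the $\Psi_0$-part of this constant is absorbed by the mean-curvature correction $(n/2-1/2)\Psi_0 f$, leaving exactly $d_{1/2}h = P_{1/2}f$. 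When $\gamma \in (1/2,1)$ the exponent is negative and the $\rho^{1-2\gamma}$ term blows up, so existence of the limit for arbitrary $f$ forces the pointwise condition $c_1 \equiv 0$ on $M$, which the same Yamabe identity reduces to $\Psi_0 \equiv 0$; in that case the expansion collapses to the Case~1 form and \eqref{compute-fractional} holds.

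The main obstacle is pinning down the boundary value $\alpha|_M$ in terms of the geometry of $(\bar X,\bar g)$. This is done using the constant-scalar-curvature condition $R_{g^+} = -n(n+1)$ and the polyhomogeneous expansion \eqref{metric1}--\eqref{beta}: reading off the first subleading coefficient in the expansion of $R_{g^+} + n(n+1)$ in powers of $\rho$ produces a pointwise identity for $\alpha|_M$ in terms of $\Psi_0$ and $\hat g$, which is the geometric input that both produces the correct coefficient of the mean-curvature correction at $\gamma = 1/2$ and converts the algebraic condition $c_1 \equiv 0$ at $\gamma > 1/2$ into the geometric condition $\Psi_0 \equiv 0$. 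Apart from this identity the rest of the argument is essentially bookkeeping in the two Taylor series $\rho(y)$ and $u(y)$, in parallel with the proofs of Theorems~\ref{thm-zero} and \ref{thm-new-extension}.
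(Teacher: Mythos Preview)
Your overall strategy is the same as the paper's, but there is a genuine gap in the first step that sends you down an unnecessary detour. When you invoke Lemma~\ref{lemma-any-extension} ``with the defining function $\rho$'' and conclude that $U:=\rho^{s-n}u$ solves \eqref{div3} in $(\bar X,\bar g)$, you are misapplying the lemma: Lemma~\ref{lemma-any-extension} produces the divergence equation in the metric $\rho^2 g^+$, and here $\rho^2 g^+=(1+\rho\alpha+\rho^n\beta)\bar g\neq\bar g$. The defining function adapted to $\bar g$ is $\hat\rho$, defined by $\bar g=\hat\rho^2 g^+$ (see \eqref{rho-hat-rho}--\eqref{metric11}), and it is $U=\hat\rho^{\,s-n}u$ that solves \eqref{div3} in $(\bar X,\bar g)$. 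This is exactly what the paper does.

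The payoff is that with $\hat\rho$ one uses relation \eqref{hat-rho}, $\hat\rho=y\bigl(1-\Psi_0\,y+O(y^2)\bigr)$, and the singular-Yamabe coefficient $\alpha$ simply never appears: the subleading coefficient in the expansion of $U$ is $-(s-n)\Psi_0 f$ on the nose, and the three cases drop out immediately from the sign of $a=1-2\gamma$ (this is \eqref{equation20} in the paper). Your route via \eqref{y-rho} introduces $c_1=-\Psi_0+\alpha/2$, and then leans on an unstated ``singular Yamabe identity for $\alpha|_M$'' to remove the extraneous $\alpha$-contribution. You neither write this identity down nor prove it; and even if one computes it (expanding $R_{g^+}+n(n+1)=0$ to subleading order does give $\alpha|_M$ proportional to $\Psi_0$), your $U=\rho^{s-n}u$ is still not the solution of \eqref{div3} in $\bar g$, so the resulting boundary limit is not the quantity in the theorem. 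In short: replace $\rho$ by $\hat\rho$ in your application of Lemma~\ref{lemma-any-extension}, use \eqref{hat-rho} instead of \eqref{y-rho}, and the whole $\alpha$-discussion disappears.
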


\begin{proof}
We follow the notations in the previous paragraphs. It is possible to find an asymptotically hyperbolic metric $g^+$ on $X$, of the form \eqref{metric1}. We define the new functions $\hat \rho$ as \eqref{rho-hat-rho}, and $y$ satisfying \eqref{metric2}. Then we have seen that it is possible to define the scattering operator for the metric $(X,g^+)$, as follows: let $u$ be the solution of the eigenvalue problem $-\Delta_{g^+}u-s(n-s)u=0$, with given data $f$. Its asymptotic expansion is given in \eqref{metric7}, $u=y^{n-s}F+y^s H$, $F_{y=0}=f$, $H|_{y=0}=h$, while the scattering operator is just $P_\gamma=d_\gamma h$.

Second, lemma \ref{lemma-any-extension} applied to the defining function $\hat \rho$ shows that
$$U:=\hat \rho^{s-n} u$$
is the solution of \eqref{div3}, since $\bar g=\hat \rho^2 g^+$.

Next, we have shown in \eqref{hat-rho} that
$$\hat \rho=y(1-y \Psi_0+O(y^2)),$$
so that we can expand $U$ as
$$U=\left[f-(s-n)\Psi_0y+O(y^2)\right]+y^{2s-n}\left[h+O(y)\right].$$
Then
\be\label{equation20}y^a \partial_y U=-(s-n)\Psi_0 f y^a +(2s-n) h+o(1).\ee
Now we take the limit above when $y\to 0$. It is clear that the limit exists if and only if $a\geq 0$, unless $\Psi_0=0$. In particular, if $\gamma< \frac{1}{2}$, then
$$\lim_{y\to 0}y^a \partial_y U=(2s-n) h,$$
as desired. For the case $\gamma=\frac{1}{2}$, just note that $d_{\frac{1}{2}}=-1$.
\end{proof}

\begin{remark} The second conclusion of theorem \ref{thm-extension-general}, i.e., the case $\gamma=\frac{1}{2}$, was shown  by Guillarmou-Gillop\'e in \cite{Guillarmou-Guillope:determinant}.
\end{remark}

% -------------------------------------------------------------------------------------------------------
% -------------------------------------------------------------------------------------------------------

% ==================================================================================================

\bibliographystyle{abbrv}

\end{document}